\documentclass[11pt]{amsart}
%\documentclass[11pt,reqno]{article} %"leqno" means it shows the equation number in left.
%a4wide,
\usepackage{amsmath,latexsym,amsfonts,amssymb}
\usepackage{mathrsfs}
\usepackage{fullpage}
\usepackage{enumerate}
\textwidth 430pt \textheight 600pt
\textheight=8.9in \textwidth=6.2in \oddsidemargin=0.25cm
\evensidemargin=0.25cm \topmargin=-.5cm

\usepackage{amssymb, amsmath, amsfonts}
\usepackage{mathrsfs}
\usepackage{epsfig,amsbsy,amsthm} % "amsbsy" produce bold math symbol
\usepackage{float,epsfig}
\usepackage{fixmath}
\usepackage{graphics}
\usepackage{pgfpages}
\usepackage{caption}
\usepackage{subcaption}
\usepackage{appendix}
\numberwithin{equation}{section}  %It gives the eq. no in fraction format.
\usepackage{hyperref} % this gives link
\usepackage{graphicx}% the above two gives link
\usepackage{pst-all}
\usepackage{calligra}
\usepackage{color}
\usepackage{tikz}
\DeclareMathAlphabet{\mathpzc}{OT1}{pzc}{m}{it}
\DeclareMathAlphabet{\mathcalligra}{T1}{calligra}{m}{n}
%%%%%%%%%%%%%%%%%%%%%%%%%%%%%%%%%%%%%%%%%%
\begin{document}
\newtheorem{theorem}{\bf Theorem}[section]
\newtheorem{proposition}[theorem]{\bf Proposition}
\newtheorem{definition}{\bf Definition}[section]
\newtheorem{corollary}[theorem]{\bf Corollary}
\newtheorem{exam}[theorem]{\bf Example}
\newtheorem{remark}[theorem]{\bf Remark}
\newtheorem{lemma}[theorem]{\bf Lemma}
\newtheorem{assum}[theorem]{\bf Assumption}

%%%%%%%%%%%%%%%%%%%%%%%%%%%%%%%%%%%%%%%%%%%%%
\newcommand{\von}{\vskip 1ex}
\newcommand{\vone}{\vskip 2ex}
\newcommand{\vtwo}{\vskip 4ex}
\newcommand{\ds}{\displaystyle}
\def \noin{\noindent}
%%%%%%%%%%%%%%%%%%%%%%%%%%%%%%%%%%%%%%%%%%%%%%
\newcommand{\be}{\begin{equation}}
\newcommand{\ee}{\end{equation}}
\newcommand{\beno}{\begin{equation*}}
\newcommand{\eeno}{\end{equation*}}
\newcommand{\ba}{\begin{align}}
\newcommand{\ea}{\end{align}}
\newcommand{\bano}{\begin{align*}}
\newcommand{\eano}{\end{align*}}
\newcommand{\bea}{\begin{eqnarray}}
\newcommand{\eea}{\end{eqnarray}}
\newcommand{\beano}{\begin{eqnarray*}}
\newcommand{\eeano}{\end{eqnarray*}}
\def \noin{\noindent}
%%%%%%%%%%%%%%%%%%%%%%%%%%% new commands %%%%%%%%%
%%%%%%%%%%%%%%%%%%%%%%%%%%%%%%%%%%%%%%%%%%%%%%%%%%%
 \def \tcK{{\tilde {\mathcal K}}}    
\def \O{{\Omega}}
\def \cT{{\mathcal T}}
\def \cV{{\mathcal V}}
\def \cE{{\mathcal E}}
\def \R{{\mathbb R}}
\def \V{{\mathbb V}}
\def \S{{\mathbb S}}
\def \N{{\mathbb N}}
\def \Z{{\mathbb Z}}
\def \Mc{{\mathcal M}}
\def \Cc{{\mathcal C}}
\def \Rc{{\mathcal R}}
\def \Ec{{\mathcal E}}
\def \Gc{{\mathcal G}}
\def \Tc{{\mathcal T}}
\def \Qc{{\mathcal Q}}
\def \Ic{{\mathcal I}}
\def \Pc{{\mathcal P}}
\def \Oc{{\mathcal O}}
\def \Uc{{\mathcal U}}
\def \Yc{{\mathcal Y}}
\def \Ac{{\mathcal A}}
\def \Bc{{\mathcal B}}
\def \k{\mathpzc{k}}
\def \Rp{\mathpzc{R}}
\def \Os{\mathscr{O}}
\def \Js{\mathscr{J}}
\def \Es{\mathscr{E}}
\def \Qs{\mathscr{Q}}
\def \Ss{\mathscr{S}}
\def \Cs{\mathscr{C}}
\def \Ds{\mathscr{D}}
\def \Ms{\mathscr{M}}
\def \Ts{\mathscr{T}}
\def \LL{L^{\infty}(L^{2}(\Omega))}
\def \LH{L^{2}(0,T;H^{1}(\Omega))}
\def \B {\mathrm{BDF}}
\def \el {\mathrm{el}}
\def \re {\mathrm{re}}
\def \e {\mathrm{e}}
\def \div {\mathrm{div}}
\def \CN {\mathrm{CN}}
\def \Rs   {\mathbf{R}_{{\mathrm es}}}
\def \Rb {\mathbf{R}}
\def \Jb {\mathbf{J}}
\def  \apos {\emph{a posteriori~}}

\def\mean#1{\left\{\hskip -5pt\left\{#1\right\}\hskip -5pt\right\}}
\def\jump#1{\left[\hskip -3.5pt\left[#1\right]\hskip -3.5pt\right]}
\def\smean#1{\{\hskip -3pt\{#1\}\hskip -3pt\}}
\def\sjump#1{[\hskip -1.5pt[#1]\hskip -1.5pt]}
\def\jumptwo{\jump{\frac{\p^2 u_h}{\p n^2}}}
%%%%%%%%%%%%%%%%%%%%%%%%%%%%%%%%%%%%%%%%%%%%%%%%%%%%%%%

\title[]
{Error analysis for parabolic optimal control problems with measure data in a nonconvex polygonal domain}

\author{ Pratibha Shakya}
\address{Department of Mathematics, Indian Institute of Technology Delhi, New Delhi, 110016}

\email{shakya.pratibha10@gmail.com}

\date{}
\maketitle

\begin{abstract}
This paper considers the finite element approximation to parabolic optimal control problems with measure data in a nonconvex polygonal domain. Such problems usually possess low regularity in the state variable due to the presence of measure data and the nonconvex  nature of the domain. The low regularity of the solution  allows the finite element approximations to converge at lower orders. We prove the existence, uniqueness and regularity results for the solution to the control problem satisfying the first order optimality condition. For our error analysis we have used piecewise linear elements for the approximation of the state and co-state variables,  whereas  piecewise constant functions are employed to approximate the control variable. The temporal discretization is  based on the implicit Euler scheme. We derive both a priori and a posteriori error bounds for the state, control  and co-state variables. Numerical experiments are performed to validate the theoretical rates of convergence.
\end{abstract}

\emph{Mathematics subject classification}: 49J20, 49K20, 65N15, 65N30.

\emph{Key words.}
A priori and a posteriori error estimates, finite element method,  measure data, nonconvex polygonal domain, optimal control problem.

\section{Introduction}
The aim of this paper is to study both a priori and a posteriori error analysis  of finite element approximations to the following model control problem:
\begin{align}
\min_{u\in U_{ad}} J(y,u),\label{intro:functional}
\end{align}
where  $J(y,u):=\frac{1}{2}\int_{0}^{T}\|y-y_d\|_{L^2(\Omega)}^2\,dt+\frac{\Lambda}{2}\int_0^T\|u\|_{L^2(\Omega)}\,dt$
with  $u$ represents  the control variable and $y$ indicates the associated state variable. The state equation is given by
\begin{align}
\begin{cases}
\frac{\partial y}{\partial t}-\Delta y=\sigma\tau+u\;\;\;\text{in}\;\Omega_T,\\
y=0\;\;\;\;\text{on}\;\Gamma_T,\\
y(\cdot,0)=y_0\;\;\;\;\text{in}\;\Omega.
\end{cases}\label{intro:state}
\end{align}
\noindent
In the above, $\Omega$ is a  nonconvex polygonal domain in $\mathbb{R}^2$ with Lipschitz boundary $\partial\Omega$. 
Set $\Omega_T=\Omega\times(0,T]$ and  $\Gamma_T=\partial\Omega\times(0,T]$. The boundary $\partial\Omega$ can be expressed as  $\partial\Omega=\displaystyle\cup_{j=1}^m\Gamma_j$ with $\Gamma_j$, $j=1,2,\ldots,m$, are  edges of the polygon.
The constraints on the control variable  are specified through the closed and convex subset of $L^2(0,T;L^2(\Omega))$ as follows:
\begin{align}
U_{ad}:=\{u\in L^2(0,T;L^2(\Omega)): u_a\leq u(x,t)\leq u_b   \; \text{for a. a.}\;(x,t)\in\Omega_T\}.\label{intro:cons}
\end{align} 
Assume that  the given functions  $y_0\in L^2(\Omega),\;y_d\in H^1(0,T;L^2(\Omega)), \;\sigma\in \mathcal{C}([0,T];L^2(\Omega))$ and $\tau\in \mathfrak{B}[0,T]$,  where $\mathfrak{B}[0,T]$ is the space of real and regular  Borel measures in $[0,T]$. 
Further,  the constants $u_a,\,u_b\in\mathbb{R}$ satisfy $u_a<u_b$, the regularization parameter  $\Lambda>0$ and the final time $T<\infty$.

Optimal control problems are widely used  in scientific and engineering applications  \cite{lions71,tiba}. The numerical study of such type of problems began in  early 1970s \cite{Falk,Geveci}.
Thereafter there have been several notable contributions to this discipline and  it is impossible to list all of them. Nevertheless, for the development of the finite element approach for parabolic optimal control problems (POCPs), see  \cite{gohi14,knowles82,vexler11,neitzel12,winther78}
  and references therein.  The authors of \cite{meivex08,meivex081} have  utilized discontinuous Galerkin technique for temporal discretization and established convergence results  for space-time finite element discretizations for POCPs. 
In \cite{vexler11a}, the authors  have  adopted Petrov Galerkin Crank-Nicolson method for discretization of the control problem and derived related error estimates.  The sparse POCPs have been analyzed  by the authors of \cite{19}, where  the control variable is taken to be an element of the  measure space. They have  provided a priori error estimates for the control problem.

Following  the work of  Babu\v{s}ka and Rheinboldt \cite{babuska78}, the adaptive finite element method  has grown popularity in scientific computing. It is well known that a posteriori error estimation is a necessary part of adaptivity for mesh refinement. The pioneer work has been made by  Liu and Yan \cite{liu01} for residual based  a posteriori error estimates,
 Becker {\it et al.} \cite{becker} for dual-weighted goal oriented  adaptivity and Li {\it et al.} \cite{li} for recovery type a posteriori error estimators. 
 A posteriori error analysis for optimal control problems governed by  parabolic equation have been extensively investigated by numerous authors in   \cite{liu03,manohar21,xi,sun,tang}. 

 POCPs are widely encountered in mathematical models representing groundwater contamination transmission,  environmental modeling, petroleum reservoir simulation, and a variety of other  applications.
There are several real-world applications for  POCPs when the state variable possesses less regularity due to the support of the source. 
Essentially, the support for the source function must be relatively tiny in comparison to the real size of the domain $\Omega$. This feature drives us to explore control problems in which the source functions are measure data (elements from $\mathcal{M}(\Omega)$). The POCPs with measure data encounter environmental concerns such as air pollution and waste-water treatment. Due to the presence of measure data, the solution of the state variable possesses less regularity which makes finite element error analysis more challenging. Therefore, an attempt has been made to study the convergence properties of the finite element method for such problems.

The study of optimal control problems governed by partial differential equations over a nonsmooth domain is a difficult task. The existence of re-entrant corners in the domain causes both theoretical and numerical analysis to be complicated. 
However, although there is a significant amount of research on the numerical analysis of the elliptic problem with a nonconvex domain \cite{babuska70n,bacuta01n,bacuta03n,grisvard85n,grisvard92n} and  quite a few works on the parabolic problem    \cite{chatzip06n,chatzip08n}. For optimal control problems, there was not much work done in the nonconvex polygonal domain.  
In recently published article \cite{apel07},  Apel {\it et al.} developed a priori error estimates for the optimal control problem on a nonconvex domain.

The numerical analysis of the problem under consideration is difficult because of the presence of measure data and the nonconvex nature of the domain. The low regularity of the solution  allows the numerical approximations to converge at lower orders. This study aims to look at the finite element approximation and mathematical formulation of the model problem. The results regarding the existence and uniqueness of the solution to the control problem are proved. Based on the necessary optimality condition, the regularity results for the control problem are explored. For the control variable, piecewise constant functions are utilized, whereas piecewise linear and continuous functions are used to approximate the state and co-state variables. The backward-Euler technique is used for temporal discretization. We studied completely discrete finite element approximations of the POCP (\ref{intro:functional})-(\ref{intro:cons}) and established both a priori and a posteriori error bounds for the state, co-state and control variables.

We mention \cite{bocg89,bocg97} for a great introduction to nonlinear parabolic equations with measure data. The author of \cite{cas97} have addressed the semilinear parabolic problems with measure data. Additionally, the asymptotic behavior of  a parabolic equation involving measure data has been  studied  by Gong in \cite{gong12}. For recent research on POCPs with measure data,  we refer to \cite{pshakya19:apos,pshakya19}.

The paper is structured as follows: We introduce some function spaces and preliminary material in Section $2$. We discuss the weak formulation and investigate the existence, uniqueness and regularity results of the  solution to the control problem (\ref{intro:functional})-(\ref{intro:cons}). The convergence analysis  for the a priori error estimates of the space-time finite element approximation to the control problem is discussed in Section 4. In Section 5, we derived a posteriori error estimates for the control problem. In the last section, we perform numerical experiments to  demonstrate the theoretical findings.

  \section{Notation and  wellposedness}
 This section introduces  some function spaces to be used in our analysis.  It also contains  the existence, uniqueness, and regularity  results of the solutions to the POCP (\ref{intro:functional})-(\ref{intro:cons}).
 
 For bounded polygonal domain $\Omega$, let the inner angles of corners of the domain be denoted by 
$ \omega_j$. Set $\beta=\displaystyle \max_{j}\frac{\pi}{\omega_j}\in (\frac{1}{2},1)$. For simplicity, it is assumed that there is only one re-entrant corner with angle $\omega$ such that $\pi<\omega<2\pi$.  For example, the interior angle for  $L$-shape domain $\omega=\frac{3\pi}{2}$, and hence  $\beta=\frac{2}{3}<1$.  Let  $\mathcal{C}(\overline{\Omega})$ denote the space of continuous functions defined on $\overline{\Omega}$.
The space  $W^{m,p}(\Omega)$ indicates  the usual Sobolev spaces \cite{adams} with  norm $\|\cdot\|_{W^{m,p}(\Omega)}$  and semi-norm $|\cdot|_{W^{m,p}(\Omega)}$. Define $W_{0}^{m.p}(\Omega):=\{v\in W^{m,p}(\Omega): v=0\;\;\text{on}\;\;\partial\Omega\}$.  For $p=2$,  the spaces $W^{m,p}(\Omega)$ and  $W^{m,p}_0(\Omega)$ are represented by  $H^{m}(\Omega)$ and $H^m_0(\Omega)$,  respectively  with  norm  $\|\cdot\|_{H^m(\Omega)}$ and semi-norm $|\cdot|_{H^{m}(\Omega)}$. 
In particular, for $0<s<1$ and $1 < p\leq \infty$, the  norm on the fractional order Sobolev space $W^{s,p}(\Omega)$ is given by
\begin{align*}
\|v\|_{W^{s,p}(\Omega)}=\Big(\|v\|_{L^p(\Omega)}^p+\int_{\Omega}\int_{\Omega}\frac{|v(x)-v(y)|^{p}}{|x-y|^{2+ps}}\,dxdy\Big)^{\frac{1}{p}}.
\end{align*}

\noindent
Set  $H^{m,m'}(\Omega_T)=L^2(0,T;H^m(\Omega))\cap H^{m'}(0,T; L^2(\Omega))$  with the standard norm
\begin{align*}
\|w\|_{H^{m,m'}(\Omega_T)}:=\Big(\int_0^T\|w(\cdot,t)\|^2_{H^m(\Omega)}\,dt+\int_{\Omega}\|w(x,\cdot)\|^2_{H^{m'}([0,T])}\,dx\Big)^{\frac{1}{2}},
\end{align*}
 where $\|\cdot\|_{H^{m'}([0,T])}$ denote the norm on $H^{m'}([0,T])$.\\
 
\noindent 
Further, 
let $\mathcal{X}(0,T), \;\mathcal{\hat{X}}(0,T)$ and  $W(0,T)$ denote $L^2(0,T;H^1_0(\Omega))\cap H^1(0,T;H^{-1}(\Omega))$, \\$  L^2(0,T;H^{1+s}(\Omega)\cap H^1_0(\Omega))\cap H^1(0,T;L^2(\Omega))$ and $L^2(0,T;H^1_0(\Omega))\cap L^{\infty}(0,T;L^2(\Omega))$, respectively for $s\in(\frac{1}{2},\beta)$. The symbols $ (\cdot,\cdot)$  and $(\cdot, \cdot)_{\Omega_T}$ denote the $L^2$-inner product on $L^2(\Omega)$  and $L^2(0,T;L^2(\Omega))$, respectively.
Hereafter  $C$ denotes   a positive generic constant which is  independent of the mesh parameters $h$ and $k$, which may depend on final time $T$. 
 
We employ the transposition approach developed by Lions and Magenes (cf. \cite{lionsm72})  to assertion that the state equation (\ref{intro:state}) has a unique solution. 
The  weak form of \eqref{intro:state} is stated as: Find $y\in W(0,T)$ such that 
  \begin{equation}
-(y,\frac{\partial w}{\partial t})_{\Omega_T}+(\nabla y,\nabla w)_{\Omega_T}=\langle\sigma\tau,w\rangle_{\Omega_T}+(u,w)_{\Omega_T}+(y_0,w(\cdot,0))\;\;\forall w\in \mathcal{X}(0,T), \label{state:control:eq:1}
\end{equation}
where we utilized  $w(\cdot,T)=0$ and  $\langle \sigma\tau,w\rangle_{\Omega_T}$ is defined as
\begin{equation*}
\langle\sigma\tau,w\rangle_{\Omega_T}=\int_0^T\int_{\Omega}\sigma(x,t)w(x,t)\,dx\,d\tau(t)\;\;\;\;\forall w\in\mathcal{C}([0,T];L^2(\Omega)).
\end{equation*}
In the subsequent theorem, we provide a priori bounds for the state variable which are essential to our analysis. For a  proof,  see \cite{pshakya19}.
\begin{theorem}\label{stability}
For  $u\in L^2(0,T;L^2(\Omega))\cap L^{\infty}(0,T;L^2(\Omega))$, assume that the given functions $\sigma\in \mathcal{C}([0,T];L^2(\Omega))$, $\tau\in \mathfrak{B}[0,T]$ and $y_0\in L^2(\Omega)$. Then,  the unique solution $y\in W(0,T)$ of the problem (\ref{intro:state}) 
exists and satisfies a priori bound:
\begin{align*}
\|y\|_{L^2(0,T;H^1_0(\Omega))}+\|y\|_{L^{\infty}(0,T;L^2(\Omega))}\leq C\Big(\|\sigma\|_{L^{\infty}(0,T;L^2(\Omega))}
\|\tau\|_{\mathfrak{B}[0,T]}+\|u\|_{L^2(0,T;L^2(\Omega))}+\|y_0\|_{L^2(\Omega)}\Big).
\end{align*}
\end{theorem}
\noindent
The weak formulation of the 
 control problem (\ref{intro:functional})-(\ref{intro:cons}) is as follows:
\begin{align}
\begin{cases}
 \displaystyle\min_{u\in U_{ad}} J(y,u)\\
 -(y,\frac{\partial w}{\partial t})_{\Omega_T}+(\nabla y,\nabla w)_{\Omega_T}=\langle\sigma\tau,w\rangle_{\Omega_T}+(u,w)_{\Omega_T}+(y_0,w(\cdot,0))\;\;\forall w\in \mathcal{X}(0,T),
\end{cases}\label{funct:weak}
\end{align}
where  $w(\cdot,T)=0$ and $\langle\sigma\tau,w\rangle_{\Omega_T}$  is defined as  before.

  From the standard arguments, there exists a unique solution $(y,u)$ for the problem (\ref{funct:weak}). Let $\mathcal{J}(u):=J(y(u),u)$ denote the reduced cost functional, where for each $u\in L^2(0,T;L^2(\Omega))$ the state $y(u)$ is the  weak solution of (\ref{state:control:eq:1}). 
 It should be noted that the cost functional of the  optimal control problem (\ref{funct:weak}) is strictly convex and hence, in light of the Theorem \ref{stability}, it  is bounded. This ensures the existence of  an optimal solution. 
 Since $\mathcal{J}$ is  twice Fr\'{e}chet differentiable convex function and $U_{ad}$ is a closed convex subset of $L^2(0,T;L^2(\Omega))$, the existence of unique control is guaranteed.  For further reading, we refer to  \cite{malanowski82}.

We now  state  the first-order optimality condition which is necessary and sufficient for the optimal control problem (\ref{funct:weak}).

\begin{lemma}
The optimal control problem (\ref{funct:weak}) has  a unique solution $(y,u)$. Then  there exists a  co-state  variable  $\phi\in \hat{\mathcal{X}}(0,T)$ which is the solution of  
\begin{align}
\begin{cases}
-(\frac{\partial \phi}{\partial t},w)+(\nabla \phi,\nabla w)=(y-y_d,w)\;\;\forall w\in L^2(0,T;H^1_0(\Omega)),\\
\phi(\cdot,T)=0\;\;\text{in}\;\;\;\Omega.\label{adjoint:eq:in:time}
\end{cases}
\end{align}
Moreover, the following variational inequality is satisfied:
\begin{equation}
\mathcal{J}'(u)(\hat{u}-{u})=\int_{0}^T(\Lambda {u}+\phi,\hat{u}-u)\,dt\geq 0\;\;\;\;\forall \hat{u}\in U_{ad}.\label{optimality:condition:exp}
\end{equation} 
\end{lemma}
\begin{proof}
Let $\hat{u}\in U_{ad}$ be arbitrary and let $u$ be the optimal solution.  Since $U_{ad}$ is convex, for $\lambda\in(0,1]$, we have  $(u+\lambda(\hat{u}-u))\in U_{ad}$.
Note that, $u$ is optimal, this implies $\mathcal{J}(u+\lambda(\hat{u}-u))\geq \mathcal{J}(u)$ and hence
\begin{align*}
\frac{1}{\lambda}(\mathcal{J}(u+\lambda(\hat{u}-u))- \mathcal{J}(u))\geq 0\;\text{for}\; \lambda\in(0,1].
\end{align*}
Letting $\lambda\rightarrow 0$, we get $\mathcal{J}'(u)(\hat{u}-u)\geq 0$, which validates 
(\ref{optimality:condition:exp}).
\end{proof}
\noindent
It is easy to verify that  the variational inequality (\ref{optimality:condition:exp}) implies
\begin{equation}
 u=P_{[u_a,u_b]}\left(-\frac{\phi}{\Lambda}\right),\label{exp:for:qbar:time}
 \end{equation}
  where  $P_{[u_a,u_b]}$ indicates the point-wise projection on  $U_{ad}$, and is defined by
\begin{equation}
P_{[u_a,u_b]}(\tilde{u}(x,t)):=\min(u_b,\,max(u_a,\, \tilde{u}(x,t))).\label{pointwise:projection}
\end{equation}

\noindent
Moreover,  there exists a  positive constant $\gamma$ such that  the following
\begin{equation}
\mathcal{J}^{\prime \prime}(u)(\tilde{u},\tilde{u})\geq \gamma \|\tilde{u}\|^{2}_{L^2(0,T;L^2(\Omega))}\;\;\;\forall \tilde{u}\in L^{2}(0,T;L^2(\Omega))\label{opsc1}
\end{equation} 
holds.

\noindent
We now state the  regularity results associated with the backward parabolic problem   without proof. The proof of which can be found in   \cite{chatzip06n}.
 \begin{proposition}\label{psi:stability}
For $g\in L^2(0,T;L^2(\Omega))$, let $\eta\in \hat{\mathcal{X}}(0,T)$ be the  solution of 
\begin{align}
 \begin{cases}
-\frac{\partial\eta}{\partial t}-\Delta\eta=g\;\;\;\;\;&\text{in}\;\;\Omega\times [0,T),\\
\eta=0\;\;\;\;\;\;\;\;\;&\text{on}\;\;\partial\Omega\times[0,T),\\
\eta(\cdot,T)=0\;\;\;\;\;&\text{in}\;\;\Omega. \label{adjoint}
\end{cases}
\end{align} 
Thus, we have the following  a priori bounds:
\begin{align*}
\|\eta\|_{H^1(0,T;L^2(\Omega))}+
\|\eta\|_{L^2(0,T;H^{1+s}(\Omega))}&\leq C_R\|g\|_{L^2(0,T;L^2(\Omega))},\\
\|\eta(\cdot,0)\|_{H^1(\Omega)}&\leq C_R\|g\|_{L^2(0,T;L^2(\Omega))},
\end{align*}
where $C_R$ is a positive regularity constant.
\end{proposition}
\noindent
 Now, we discuss the regularity of the solution  to the problems (\ref{funct:weak}) and  (\ref{adjoint:eq:in:time}) in the following lemma. 
 \begin{lemma}\label{regularity:u}
 Let $(y,u)$ be the solution of the optimization problem (\ref{funct:weak}), and let $\phi$ be the solution of (\ref{adjoint:eq:in:time}). Then,  we have 
 \begin{align*}
 (y,u,\phi)\in  W(0,T)\times \hat{\mathcal{X}}(0,T)\times \hat{\mathcal{X}}(0,T).
 \end{align*}
 \end{lemma}
 \begin{proof}
We deduce from Theorem \ref{stability}  that $y\in W(0,T)$. For $y_d\in L^2(0,T;L^2(\Omega))$ implies $\phi\in \hat{\mathcal{X}}(0,T)$, which together with (\ref{exp:for:qbar:time}) gives $u\in \hat{\mathcal{X}}(0,T)$. 
 \end{proof}

 \section{Finite Element Discretization}

This section is focused on the approximation of the POCP (\ref{funct:weak}) using the finite element technique.

Let $h=\displaystyle\max_{K\in\mathcal{T}_h} diam(K)$ be the maximum diameter of the triangles formed by the quasi-uniform triangulation $\mathcal{T}_h$ of $\Omega$. Let $\mathcal{E}_h$ indicate the set of all interior edges. For a piecewise scalar function $v$, the jump of $v$ across an edge $e$ is given by $\sjump{v}=v|_{K^+}-v|_{K^{-}}$, where $K^{+}$ and $K^{-}$ are two triangles that share the common edge $e$.

 \noindent
The finite element spaces defined for a particular triangulation $\mathcal{T}_h$ are given as: 
\begin{align*}
\mathbb{W}_{h}&:=\{w_h\in\mathcal{C}(\overline{\Omega}):\,w_h|_{K} \;\;\text{is a linear polynomial}\},\\
\mathbb{U}_{ad,h}&:=\{\hat{u}_h\in U_{ad}:\, \hat{u}_h|_{K}\;\;\text{is a constant}\}.
\end{align*}  
 With $\mathbb{W}_h$ defined as above, set $\mathbb{W}_h^0=\mathbb{W}_h\cap H_0^1(\Omega)$.
 The following inverse estimate holds  for $w_h\in\mathbb{W}_h$  (cf. \cite{ciar78}):
\begin{align}
\|w_h\|_{H^{p_2}(\Omega)}\leq Ch^{p_1-p_2}\|w_h\|_{H^{p_1}(\Omega)},\;\;0\leq p_1\leq p_2\leq 1,\;\;\;\forall w_h\in\mathbb{W}_h.\label{inverse:estimate:1}
\end{align}
In the following lemmas, we recall the approximation properties associated with the elliptic projection and the $L^2$-projection  (cf. \cite{chatzip06n,chrysafinos02}).

%Below, we state the   $L^2$-projection and the elliptic projection and tabulate their approximation properties in the following lemmas. For proof, see \cite{chatzip06n}.
\begin{lemma}\label{projection:rh}
The elliptic projection $\mathcal{P}_h^1:H_0^1(\Omega)\rightarrow \mathbb{W}_h^0$ is defined as
\begin{equation*}
(\nabla(\mathcal{P}_h^1 w-w),\nabla w_h)=0\;\;\;\;\;\forall w_h\in \mathbb{W}_h^0.\label{rh:projection:operator}
\end{equation*}
 Then, for $s\in (\frac{1}{2},\beta)$, we have  
\begin{align*}
\|w-\mathcal{P}_h^1 w\|_{L^2(\Omega)}&+h^{s}\|\nabla w-\nabla\mathcal{P}_h^1 w\|_{L^2(\Omega)}\leq Ch^{2s}\|w\|_{H^{1+s}(\Omega)}.
\end{align*}
Moreover,
\begin{align*}
\|w-\mathcal{P}_h^1 w\|_{L^2(\Omega)}\leq Ch^{s}\|w\|_{H^1(\Omega)}.
\end{align*}
\end{lemma}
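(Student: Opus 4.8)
The plan is to combine the Galerkin (C\'{e}a) quasi-optimality of the energy projection with a fractional interpolation estimate, and then to upgrade to the sharp $L^2$ rate by an Aubin--Nitsche duality argument. The single ingredient that carries all the difficulty of the nonconvex geometry is the elliptic regularity shift on the polygon: for the homogeneous Dirichlet Laplacian, $-\Delta v=f$ with $f\in H^{-1+\beta}(\Omega)$ yields $v\in H^{1+\beta}(\Omega)\cap H^1_0(\Omega)$ together with $\|v\|_{H^{1+\beta}(\Omega)}\le C\|f\|_{H^{-1+\beta}(\Omega)}$, valid precisely for $\beta<s$, where $s=\min\{1,\pi/\omega\}$ is governed by the largest interior angle $\omega$ of $\Omega$; for the reentrant case $\omega\in(\pi,2\pi)$ one has $s\in(1/2,1)$. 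Applying this to $w$ itself and using the embedding $H^{-1+s}(\Omega)\hookrightarrow H^{-1+\beta}(\Omega)$ for $\beta<s$, I would first record $\|w\|_{H^{1+\beta}(\Omega)}\le C\|\Delta w\|_{H^{-1+\beta}(\Omega)}\le C\|\Delta w\|_{H^{-1+s}(\Omega)}$.

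For the $H^1$ bound, write $e:=w-\mathcal{P}_h^1 w\in H^1_0(\Omega)$ and recall the Galerkin orthogonality $\mathcal{A}(e,w_h)=0$ for all $w_h\in W_h^0$, which is exactly the definition of $\mathcal{P}_h^1$. Since $\mathcal{A}$ is bounded and coercive on $H^1_0(\Omega)$ (coercivity via the Poincar\'{e} inequality), C\'{e}a's lemma gives the quasi-optimality $\|e\|_{H^1(\Omega)}\le C\inf_{w_h\in W_h^0}\|w-w_h\|_{H^1(\Omega)}$. Choosing $w_h=I_h w$, the nodal interpolant (well defined since $H^{1+\beta}(\Omega)\hookrightarrow\mathcal{C}(\overline{\Omega})$ in two dimensions for $\beta>0$), and invoking the fractional interpolation estimate $\|w-I_h w\|_{H^1(\Omega)}\le Ch^{\beta}\|w\|_{H^{1+\beta}(\Omega)}$, the regularity bound above yields $\|e\|_{H^1(\Omega)}\le Ch^{\beta}\|\Delta w\|_{H^{-1+s}(\Omega)}$, which is the $H^1$ part of the claim.

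To obtain the doubled rate in $L^2$, I would run the Aubin--Nitsche trick. Let $\phi\in H^1_0(\Omega)$ solve $-\Delta\phi=e$ in $\Omega$; by the same elliptic shift (with right-hand side $e\in L^2(\Omega)=H^0(\Omega)\hookrightarrow H^{-1+\beta}(\Omega)$) one has $\phi\in H^{1+\beta}(\Omega)$ with $\|\phi\|_{H^{1+\beta}(\Omega)}\le C\|e\|_{L^2(\Omega)}$. Testing, integrating by parts, and using Galerkin orthogonality against $I_h\phi$ gives $\|e\|_{L^2(\Omega)}^2=\mathcal{A}(e,\phi)=\mathcal{A}(e,\phi-I_h\phi)$. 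Cauchy--Schwarz, the interpolation estimate $\|\phi-I_h\phi\|_{H^1(\Omega)}\le Ch^{\beta}\|\phi\|_{H^{1+\beta}(\Omega)}$, and the $H^1$ bound already proved then yield $\|e\|_{L^2(\Omega)}^2\le Ch^{\beta}\|\Delta w\|_{H^{-1+s}(\Omega)}\cdot Ch^{\beta}\|e\|_{L^2(\Omega)}$; cancelling one factor of $\|e\|_{L^2(\Omega)}$ produces the $h^{2\beta}$ estimate and completes the first displayed inequality.

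The second (coarser) estimate follows from the same duality identity but using only $w\in H^1(\Omega)$: from $\|e\|_{L^2(\Omega)}^2=\mathcal{A}(e,\phi-I_h\phi)\le\|e\|_{H^1(\Omega)}\,\|\phi-I_h\phi\|_{H^1(\Omega)}$, the $H^1$-stability of the energy projection $\|e\|_{H^1(\Omega)}\le C\|w\|_{H^1(\Omega)}$ together with $\|\phi-I_h\phi\|_{H^1(\Omega)}\le Ch^{\beta}\|\phi\|_{H^{1+\beta}(\Omega)}\le Ch^{\beta}\|e\|_{L^2(\Omega)}$ gives $\|e\|_{L^2(\Omega)}\le Ch^{\beta}\|w\|_{H^1(\Omega)}$. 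I expect the main obstacle to be purely the sharp fractional elliptic regularity at the reentrant corner --- fixing the admissible range $\beta<s$ and the constant in $\|v\|_{H^{1+\beta}(\Omega)}\le C\|f\|_{H^{-1+\beta}(\Omega)}$ --- since once that shift theorem is in hand, the C\'{e}a and Aubin--Nitsche steps are entirely standard; this is presumably why the statement is only quoted from \cite{chatzip06n,chatzip08n}.
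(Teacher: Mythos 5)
The paper does not actually prove this lemma --- it is quoted from \cite{chatzip06n,chatzip08n} --- and your argument is the standard one underlying those references: the fractional elliptic shift on the nonconvex polygon, C\'ea's lemma combined with a fractional-order interpolation estimate, and Aubin--Nitsche duality, all applied correctly, including the coarser $L^2$ bound obtained from the $H^1$-stability of the energy projection. The only point worth flagging is that the nodal interpolant is not bounded on $H^1(\Omega)$ in two dimensions, so the estimate $\|w-I_hw\|_{H^1(\Omega)}\le Ch^{\beta}\|w\|_{H^{1+\beta}(\Omega)}$ must be obtained directly (Bramble--Hilbert/Dupont--Scott in fractional-order spaces) or via a quasi-interpolant of Cl\'ement or Scott--Zhang type rather than by operator interpolation between $H^1$ and $H^2$; since you invoke it only for arguments in $H^{1+\beta}(\Omega)$ with $\beta>0$, where $I_h$ is well defined, this is a technical remark rather than a gap.
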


\begin{lemma}\label{projection:lh}
The $L^2$-projection  $\mathcal{P}_h^0:L^2(\Omega)\rightarrow \mathbb{W}_h $ is defined as
\begin{equation}
(\mathcal{P}_h^0 w-w,w_h)=0\;\;\;\;\;\forall w_h\in \mathbb{W}_h.\label{l2:projection:operator}
\end{equation}
 Then, for $s\in (\frac{1}{2},\beta)$, we have the following estimates
\begin{align*}
&\|w-\mathcal{P}_h^0 w\|_{H^{-1}(\Omega)}+h^{s}\|w-\mathcal{P}_h^0 w\|_{L^2(\Omega)}\leq Ch^{2s}\|w\|_{H^{1}(\Omega)},\\
&\|w-\mathcal{P}_h^0 w\|_{H^1(\Omega)}\leq  Ch^{2s-1}\| w\|_{H^{1+s}(\Omega)}.
\end{align*}
\end{lemma}

\noindent 
With $y_{h,0}= \mathcal{P}_h^0 y_0$,
 the spatially discrete  approximation of the  problem  (\ref{funct:weak}) is to find $(y_h(t),u_h(t))\in  L^2(0,T;\mathbb{W}_h^0)\times L^2(0,T;\mathbb{U}_{ad,h})$ such that
\begin{align}
\min_{u_h\in L^2(0,T;\mathbb{U}_{ad,h})} J_h(y_h,u_h)= \frac{1}{2}\int_{0}^{T}\left\{\|y_h-y_{d}\|^2_{L^2(\Omega)}+\Lambda\|u_h\|^2_{L^2(\Omega)}\right\}\,dt\label{semi:discrete:functional}
\end{align}
subject to the state equation
\begin{equation}
-(y_h,\frac{\partial w_h}{\partial t})_{\Omega_T}+(\nabla y_h,\nabla w_h)_{\Omega_T}=\langle\sigma\tau,w_h\rangle_{\Omega_T}+(u_h,w_h)_{\Omega_T}+(y_{h,0},w_h(\cdot,0)),\label{semi:discrete:state:eq}
\end{equation}
where $w_h(\cdot,T)=0$ and 
 \begin{align*}
 \langle\sigma\tau,w_h\rangle_{\Omega_T}=\int_0^T\int_{\Omega}\sigma w_h\,dx\,d\tau(t)\;\;\;\forall w_h\in H^1(0,T;\mathbb{W}_h^0).
\end{align*}

Next, we consider the completely discrete approximation  of the  spatially discrete problem (\ref{semi:discrete:functional})-(\ref{semi:discrete:state:eq}). For this,
we introduce a partition of  $[0,T]$  as $0=t_0<t_1<\ldots<t_{N-1}<t_{N}=T$. Utilizing the time partition, we notice that the time  interval $[0,T]$ is divided  into subintervals $I_{n}=(t_{n-1},t_n]$ with time  step $k_n=t_n-t_{n-1}$ and $k=\displaystyle\max_{1\leq n\leq N}k_n$. We assume that the time partition is quasi-uniform, i.e., there exist positive constants $c_1$ and $c_2$  such that $c_1k_n\leq k\leq c_2k_n$ holds for each $n\in[1: N]$. Set $\chi^n:=\chi(x,t_n)$  for any sequence of functions $\{\chi^n\}_{n=0}^{N}$ defined in $\Omega$, and define $D_{k_n}\chi^{n+1}=\frac{(\chi^{n+1}-\chi^{n})}{k_n}$.
Construct 
the finite element space $\mathbb{W}_h^n\subset H^1_0(\Omega)$ related with the mesh $\mathcal{T}_h^n$.  Similar to $\mathcal{E}_h$, we indicate $\mathcal{E}_h^n$ as set of all internal edges of $\mathcal{T}_h^n$. 
For $ n\in[1:N]$, define the discrete  space for the control variable as:  
\begin{align*}
\mathbb{U}^n_{ad}:=\{\tilde{u}\in \mathbb{U}_{ad}: \;\,\tilde{u}|_{I_n\times K}=constant,\;\;K\in \mathcal{T}_h^n\}. 
\end{align*}

\noindent
Let $V_{k}$ indicate the space of piecewise constant functions on the time partition. Define $P_k^n:L^2(0,T)\rightarrow I_n$ as
 \begin{align*}
  P_{k}^n v:=(P_{k}v)(t)|_{I_n}=\frac{1}{k_n}\int_{I_n}v(t)\,dt\;\;\text{for}\;\;t\in I_n,
  \end{align*}
and indicate $P_{k}:L^2(0,T)\rightarrow V_k$ such that $P_k v|_{I_n}=P_k^nv$.
Then, $P_k$ fulfils
  \begin{align}
  \|(I-P_{k})v\|_{L^2(0,T;L^2(\Omega))}\leq Ck\|v_t\|_{L^2(0,T;L^2(\Omega))} \;\;\forall v\in H^1(0,T;L^2(\Omega)).\label{time:pro}
  \end{align}
  
\noindent  
 The completely discrete approximation of (\ref{semi:discrete:functional})-(\ref{semi:discrete:state:eq})  is defined as: Find $(y^{n}_h,u^{n}_{h})\in  \mathbb{W}_h^n\times \mathbb{U}_{ad}^n$  for $n\in[1:N]$, such that
\begin{align}
\min_{u^{n}_{h}\in \mathbb{U}^n_{ad}} \mathcal{J}_n(u_h^n):=J(y_h^n,u_h^n)=
\frac{1}{2} \sum_{n=1}^{N}\int_{t_{n-1}}^{t_n}\Big\{\|y^{n}_{h}-P_{k}^ny_{d}\|^{2}_{L^{2}(\Omega)}+\Lambda\|u^{n}_{h}\|^{2}_{L^{2}(\Omega)}\Big\}dt\label{fully:discrete:functional}
\end{align}
subject to
\begin{align}
\begin{cases}
(D_{k_n}y_h^n,w_h)+(\nabla y_h^n,\nabla w_h)=\langle\sigma\tau,w_h\rangle_{I_n}+(u_h^n,w_h)\;\;\forall w_h\in \mathbb{W}_h^n,\\
y_h^0=y_{h,0},\label{fully:discrete:state}
\end{cases}
\end{align}
where  $\langle\sigma\tau,w_h\rangle_{I_n}$ is given by
\begin{equation*}
\langle\sigma\tau,w_h\rangle_{I_n}=\frac{1}{k_n}\int_{t_{n-1}}^{t_{n}}\int_{\Omega}\sigma(x,t)w_h(x)\,dx\,d\tau(t)\;\;\forall w_h\in \mathbb{W}_h^n.
\end{equation*}

In the following, we need to investigate the stability  behaviour of the solution to the completely discrete state equation (\ref{fully:discrete:state}) concerning the initial value $y_0$, the measure data $\sigma\tau$ and the discrete control variable $u_h^n$. 
 \begin{lemma}\label{fully:discrete:inter:lemma:1}
   For $n\in [1:N]$, consider $y_{h,0}=\mathcal{P}_h^0 y_0$ and let $y_h^n\in \mathbb{W}_h^n$ be the solution of (\ref{fully:discrete:state}). Then, we have the following estimates:
  \begin{align*}
  \sum_{n=1}^{N}\|y_h^n-y_h^{n-1}\|^2_{L^2(\Omega)}+Ck\|y_h^N\|_{H^1(\Omega)}^2&\leq C\Big(\|\sigma\|^2_{L^{\infty}(0,T;L^2(\Omega))}\|\tau\|^2_{\mathfrak{B}[0,T]}+kh^{-2}\|y_0\|^2_{L^2(\Omega)}\Big)\nonumber\\&\quad\quad+Ck\|u_h^n\|^2_{L^2(0,T;L^2(\Omega))},
  \end{align*}
  and
  \begin{align*}
 \|y_h^N\|^2_{L^2(\Omega)}+C\sum_{n=1}^{N}k\|y_h^n\|_{H^1(\Omega)}^{2}&\leq C\Big(\|\sigma\|^2_{L^{\infty}(0,T;L^2(\Omega))}\|\tau\|^2_{\mathfrak{B}[0,T]}+\|y_0\|^2_{L^2(\Omega)}\Big)\nonumber\\&\quad\quad+Ck\|u_h^n\|^2_{L^2(0,T;L^2(\Omega))}.
\end{align*}
\end{lemma}
\begin{proof}
The proof proceeds in the same lines as  \cite{pshakya19}, hence we omit the details.
\end{proof}

The completely  discrete POCP (\ref{fully:discrete:functional})-(\ref{fully:discrete:state}) has a unique solution $(y^{n}_{h},u^{n}_{h})$ for $n\in [1:N]$, such that  the triplet $(y^{n}_{h},u^{n}_{h},\phi^{n-1}_{h})$   fulfils
\begin{align}
(D_{k_n}y_h^n,w_h)+(\nabla y_h^n,\nabla w_h)&=\langle\sigma\tau,w_h\rangle_{I_n}+(u_h^n,w_h)\;\;\forall w_h\in \mathbb{W}_h^n,\label{fully:optimal:state}\\
-(D_{k_n}\phi_h^n,w_h)+(\nabla \phi_h^{n-1},\nabla w_h)&=(y_h^n-P_k^ny_d,w_h)\;\;\forall w_h\in \mathbb{W}_h^n,\label{fully:discrete:costate}\\
\phi^N_h&=0,\label{fully:final:time}\\
(\Lambda u_h^n+\phi_h^{n-1},\hat{u}_h^n-u_h^n)&\geq 0\;\;\;\;\;\forall \hat{u}_h^n\in \mathbb{U}_{ad}^n.\label{fully:discrete:optimality:condition}
\end{align}

\section{A priori error estimates}
This section concerns a priori error estimates for the control, state and co-state variables.

For $n\in[1:N]$, on each time interval $I_n$,  define $Y_h(t):=y_h^n$, $U_h(t):=u_h^n$, and continuous piecewise linear interpolant $\Phi_h(t)$ as
\begin{align*}
\Phi_h(t):=\frac{(t_n-t)}{k_n}\phi_h^{n-1}+\frac{(t-t_{n-1})}{k_n}\phi_h^n.\end{align*}

 \noindent
Here, we first  introduce the  auxiliary problems for the state and co-state variables as follows: Find $y_h^n(u)\in \mathbb{W}_h^n$ such that 
\begin{align}
\begin{cases}
(D_{k_n} y_h^n(u),w_h)+(\nabla y_h^n(u),\nabla w_h)=\langle\sigma\tau,w_h\rangle_{I_n}+(u,w_h)\;\;\;\;\;\forall w_h\in \mathbb{W}_h^n,\;n\geq 1,\\y_h^0(u)=y_{h,0},\label{inter:state:measure:data:in:time}
\end{cases}
\end{align} 
and  for $n<N$, let $\phi_h^{n-1}(u)\in \mathbb{W}_h^n$ be the solution of 
\begin{align}
\begin{cases}
-(D_{k_n} \phi_h^n(u),w_h)+(\nabla \phi_h^{n-1}(u),\nabla w_h)=(y_h^n(u)-P_k^ny_{d},w_h)\;\;\;\forall w_h\in \mathbb{W}_h^n,\\
\phi^N_h(u)=0.\label{inter:fully:discrete:costate}
\end{cases}
\end{align}
The preliminary error bounds for the state and co-state variables  are provided in the next lemma.
\begin{lemma}\label{inter:error:estimate:theorem:for:state:time}
Under the assumption of Theorem \ref{stability}, let
$(y,\phi)$ and $(Y_h(u),\Phi_h(u))$ be the solutions  (\ref{state:control:eq:1}), (\ref{adjoint:eq:in:time}) and   (\ref{inter:state:measure:data:in:time})-(\ref{inter:fully:discrete:costate}), respectively. Then,  for $y_d\in H^1(0,T;L^2(\Omega))$ and $s\in (\frac{1}{2},\beta)$, the following estimates  
\begin{align}
\|y-{Y}_h(u)\|_{L^2(0,T;L^2(\Omega))}&\leq C(h^{2s}k^{-\frac{1}{2}}+k^{\frac{1}{2}}+h^{s})\Big\{\|y_0\|_{L^2(\Omega)}+\|\sigma\|_{L^\infty(0,T;L^2(\Omega))}\|\tau\|_{\mathfrak{B}[0,T]}\nonumber\\&\quad\quad+\|u\|_{L^2(0,T;L^2(\Omega))}\Big\},\label{fuu:es1}\\
\|\phi-\Phi_h(u)\|_{L^2(0,T;L^2(\Omega))}&\leq C(h^{2s}k^{-\frac{1}{2}}+k^{\frac{1}{2}}+h^s)\Big\{\|y_h^n(u)\|_{L^2(0,T;L^2(\Omega))}+\|P_k^ny_d\|_{L^2(0,T;L^2(\Omega))}\Big\}\nonumber\\&\quad\quad+Ck\|y_{d,t}\|_{L^2(0,T;L^2(\Omega))}+C\|y-{Y}_h(u)\|_{L^2(0,T;L^2(\Omega))}\label{fuu:es2}
\end{align}
hold.
\end{lemma}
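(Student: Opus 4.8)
The plan is to prove both bounds by a duality argument that trades the low regularity coming from the measure data for approximation errors of a fully discrete backward-in-time parabolic problem with smooth data, exactly as was done for the spatially discrete case in Lemma \ref{inter:state:estimate:time}, now with the additional backward-Euler temporal discretization.

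First I would establish the state estimate (\ref{fuu:es1}). Let $\eta$ solve the continuous backward problem (\ref{adjoint}) with right-hand side $g=y-Y_h(u)$, and let $\eta_{h,k}$ denote its fully discrete backward-Euler approximation on the spaces $W_h^i$. Starting from $\|y-Y_h(u)\|^2_{L^2(0,T;L^2(\Omega))}=(y-Y_h(u),-\eta_t-\Delta\eta)_{\Omega_T}$, I would insert the continuous weak form (\ref{state:control:eq:1:weak}) tested against $\eta$ and the fully discrete state equation (\ref{inter:state:measure:data:in:time}) tested against $\eta_{h,k}$. After cancellation the squared error collapses onto three driving terms: the measure term $\langle\mu,\eta-\eta_{h,k}\rangle_{\Omega_T}$, the control term $(u,\eta-\eta_{h,k})_{\Omega_T}$, and the initial-data mismatch. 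For the last term, writing $y_{h,0}=\mathcal{P}_h^0 y_0$ and using the orthogonality $(y_0-\mathcal{P}_h^0 y_0,w_h)=0$ for $w_h\in W_h$ kills the projection remainder, so it reduces to $(y_0,\eta(\cdot,0)-\eta_{h,k}(\cdot,0))$. The measure and initial terms are then paired against $\|\eta-\eta_{h,k}\|_{L^\infty(0,T;L^2(\Omega))}$ (producing the factors $\|\sigma\|_{L^\infty(0,T;L^2(\Omega))}\|\tau\|_{\mathfrak{B}[0,T]}$ and $\|y_0\|_{L^2(\Omega)}$), while the control term is paired against $\|\eta-\eta_{h,k}\|_{L^2(0,T;L^2(\Omega))}$.

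The heart of the argument is therefore a fully discrete error estimate for the dual problem, which I would obtain by splitting $\eta-\eta_{h,k}$ through the time projection $P_k$ and the semi-discrete solution $\eta_h$, combining the spatial rates of Lemma \ref{error:between:psi:psih} with the temporal rate $Ck\|\eta_t\|_{L^2(0,T;L^2(\Omega))}$ from (\ref{time:pro}) and the stability bounds of Lemma \ref{psi:stability}. I expect the main obstacle to be the mixed term $h^{2\beta}k^{-1/2}$: the time-averaged data $\langle\mu,\cdot\rangle_{I_i}$ carries a factor $1/k$, so when the $L^2$-projection error of order $h^{2\beta}$ is summed against the temporally concentrated measure and the backward-Euler increments, an inverse power of $k$ is unavoidable. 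Controlling it requires an energy estimate for the discrete increments together with the inverse estimate (\ref{inverse:estimate:1}), and this is precisely the step where only $\beta<s\leq 1$ is available in the nonconvex setting. The half-order $k^{1/2}$ (rather than full $k$) likewise reflects the limited temporal regularity of the Borel measure $\tau$. Finally, substituting $\|\Delta\eta\|_{L^2(0,T;H^{-1+s}(\Omega))}+\|\eta_t\|_{L^2(0,T;L^2(\Omega))}\leq C\|y-Y_h(u)\|_{L^2(0,T;L^2(\Omega))}$ from Lemma \ref{psi:stability} and cancelling one factor of the error yields (\ref{fuu:es1}).

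For the co-state estimate (\ref{fuu:es2}) I would argue in the same spirit but for the backward problem (\ref{adjoint:eq:in:time}), whose source is $y-y_d$ continuously and $y_h^i(u)-P_k^i y_d$ discretely. I would decompose $z-Z_h(u)$ into three contributions: the genuine discretization error of the co-state solve, driven by its own right-hand side and producing the factors $h^{2\beta}k^{-1/2}+k$ against $\|y_h^i(u)\|_{L^2(0,T;L^2(\Omega))}+\|P_k^i y_d\|_{L^2(0,T;L^2(\Omega))}$; the time-projection consistency of the desired state, bounded by $Ck\|y_{d,t}\|_{L^2(0,T;L^2(\Omega))}$ via (\ref{time:pro}); and the propagation of the state error through the stability of the discrete co-state problem, giving the term $C\|y-Y_h(u)\|_{L^2(0,T;L^2(\Omega))}$. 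Because the co-state source is an $L^2$-in-time function rather than a measure, the temporal order here improves to full $k$. Assembling these three pieces gives (\ref{fuu:es2}), and since the individual estimates parallel the state case I would present only the points where the smoother right-hand side changes the temporal rate.
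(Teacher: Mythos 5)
Your overall strategy---a duality argument against the backward problem (\ref{adjoint})---is the right one, and your treatment of the initial-data term and your sketch for the co-state estimate (\ref{fuu:es2}) match the intended structure. But there is a genuine gap in the central step. You discharge the whole estimate onto a fully discrete backward-Euler approximation $\eta_{h,k}$ of the dual problem and then assert that the required bounds for $\|\eta-\eta_{h,k}\|_{L^\infty(0,T;L^2(\Omega))}$ and $\|\eta-\eta_{h,k}\|_{L^2(0,T;L^2(\Omega))}$ follow by ``combining'' Lemma \ref{error:between:psi:psih} with (\ref{time:pro}). No such fully discrete estimate is available in the paper: Lemma \ref{error:between:psi:psih} is purely spatial, and (\ref{time:pro}) controls the projection $P_k$, not the backward-Euler error, so the key quantitative input of your argument is left unproved. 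The paper avoids this by never solving a discrete dual problem: it tests the fully discrete state equation (\ref{inter:state:measure:data:in:time}) against $P_k^i\mathcal{P}_h^1\eta$, i.e.\ against projections of the \emph{continuous} dual solution. The price is that the discrete time increments do not cancel, leaving the term $I_1=-\sum_{i}\int_{I_i}\frac{1}{k}(y_h^i(u)-y_h^{i-1}(u),\eta^{i-1}-P_k^i\mathcal{P}_h^1\eta)\,dt$, which is then bounded by Cauchy--Schwarz using the increment stability of Lemma \ref{fully:discrete:inter:lemma:1} on one factor and the projection errors on the other; this term is entirely absent from your account, yet it is the dominant one.

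Relatedly, your explanation of where the rates come from is off target. The mixed term $h^{2\beta}k^{-\frac{1}{2}}$ does not originate in the $1/k$ carried by $\langle\mu,\cdot\rangle_{I_i}$, nor in an inverse estimate on the increments; it arises when bounding the second factor of $I_1$, where the pointwise-in-time bound $\|\Delta P_k^i\eta\|_{H^{-1+s}(\Omega)}\leq k^{-\frac{1}{2}}\|\Delta\eta\|_{L^2(t_{i-1},t_i;H^{-1+s}(\Omega))}$ costs $k^{-\frac{1}{2}}$ on top of the $h^{2\beta}$ spatial projection error. Likewise the half order $k^{\frac{1}{2}}$ reflects the limited temporal regularity of the dual solution (only $\eta_t\in L^2(0,T;L^2(\Omega))$, via $\|\eta^{i-1}-P_k^i\eta\|_{L^2(\Omega)}\leq k^{\frac{1}{2}}\|\eta_t\|_{L^2(t_{i-1},t_i;L^2(\Omega))}$), not of the measure $\tau$; the measure term itself only contributes $(k^{\frac{1}{2}}+h^{\beta})$, and the initial-data term only $h^{2\beta}$. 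If you wish to keep your route through a discrete dual solve, you must first prove the fully discrete analogue of Lemma \ref{error:between:psi:psih} under the regularity $\Delta\eta\in L^2(0,T;H^{-1+s}(\Omega))$, and that proof would essentially reproduce the increment/projection analysis you are trying to bypass.
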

\begin{proof}
 Let $\eta$  solves the problem (\ref{adjoint}) with $g\in L^2(0,T;L^2(\Omega))$.  In analogy with (\ref{state:control:eq:1}) and (\ref{inter:state:measure:data:in:time}), we write
 \begin{align}
\int_{\Omega_T}(y-{Y}_h(u))g\,dxdt&=\int_0^T\int_{\Omega}(y-{Y}_h(u))(-\frac{\partial \eta}{\partial t}-\Delta\eta)\,dxdt\nonumber\\
&=-(y, \frac{\partial \eta}{\partial t})_{\Omega_T}+(\nabla y,\nabla \eta)_{\Omega_T}+\sum_{n=1}^{N}\int_{t_{n-1}}^{t_n}\Big((y_h^n(u), \frac{\partial \eta}{\partial t})-(\nabla y_h^n(u),\nabla\eta)\Big)\,dt\nonumber
\\
&=\langle\sigma\tau,\eta\rangle_{\Omega_T}+(u,\eta)_{\Omega_T}+(y_0,\eta(\cdot,0))\nonumber\\&\quad+\sum_{n=1}^{N}\int_{t_{n-1}}^{t_n}\Big\{k_{n}^{-1}(y_h^n(u),\eta^n-\eta^{n-1})-(\nabla y_h^n(u),\nabla \eta)\Big\}\,dt.\nonumber
\end{align}
Use of summation by parts and $\eta^N=0$ gives
\begin{align}
\int_{\Omega_T}(y-{Y}_h(u))g\,dxdt&=\langle\sigma\tau,\eta\rangle_{\Omega_T}+(y_0,\eta(\cdot,0))-\sum_{n=1}^{N}\int_{t_{n-1}}^{t_n}\Big\{k_{n}^{-1}(y_h^n(u)-y_h^{n-1}(u),\eta^{n-1})\nonumber\\&\quad+(\nabla y_h^n(u),\nabla \eta)\Big\}\,dt+(y_h^N(u),\eta^N)-(y_{h,0},\eta(\cdot,0))+(u,\eta)_{\Omega_T}\nonumber\\
&=-\sum_{n=1}^{N}\int_{t_{n-1}}^{t_n}\Big\{k_{n}^{-1}(y_h^n(u)-y_h^{n-1}(u),\eta^{n-1})+(\nabla y_h^n(u),\nabla \eta)\Big\}\,dt+\langle\sigma\tau,\eta\rangle_{\Omega_T}\nonumber\\&\quad+(y_0-y_{h,0},\eta(\cdot,0))+(u,\eta)_{\Omega_T}.\label{f:d:e:1}
\end{align}
Utilize (\ref{inter:state:measure:data:in:time}) to have
\begin{align}
\sum_{n=1}^{N}\{(D_{k_n} y_h^n(u),P_k^n\mathcal{P}_h^1\eta)+(\nabla y_h^n(u),\nabla P_k^n\mathcal{P}_h^1\eta)\}=\sum_{n=1}^{N}\langle\sigma\tau,P_k^n\mathcal{P}_h^1\eta\rangle_{I_n}+\sum_{n=1}^{N}(u,P_k^n\mathcal{P}_h^1\eta).\label{main:state:error:eq:1}
\end{align}
Applications of 
 (\ref{f:d:e:1}) and (\ref{main:state:error:eq:1}) together with the fact  $\int_{t_{n-1}}^{t_n}(\eta-P_k^n\eta)\,dt=0$ lead to
\begin{align}
&\int_{\Omega_T}(y-{Y}_h(u))g\,dxdt=-\sum_{n=1}^{N}\int_{t_{n-1}}^{t_n}\Big\{k_{n}^{-1}(y_h^n(u)-y_h^{n-1}(u),\eta^{n-1}-P_k^n\mathcal{P}_h^1\eta)\nonumber\\&+(\nabla y_h^n(u),\nabla (P_k^n\eta-P_k^n\mathcal{P}_h^1\eta))\Big\}\,dt+\Big\{\langle\sigma\tau,\eta\rangle_{\Omega_T}-\sum_{n=1}^{N}\int_{t_{n-1}}^{t_n}\langle\sigma\tau,P_k^n\mathcal{P}_{h}^1\eta\rangle_{I_n} dt\Big\}\nonumber\\&+\Big\{(y_0-y_{h,0},\eta(\cdot,0))\Big\}+\Big\{(u,\eta)_{\Omega_T}-\sum_{n=1}^N\int_{t_{n-1}}^{t_n}(u,P_k^n\mathcal{P}_h^1\eta)\,dt\Big\}\nonumber\\=:&I_1+I_2+I_3+I_4.\label{rearranging:fully:discrete:1}
\end{align}
For $I_1$, using  the definition of  elliptic projection  and the fact $y_h^n(u)\in \mathbb{W}_h^n$,  we obtain 
\begin{equation*}
\int_{t_{n-1}}^{t_n}(\nabla y_h^n(u),\nabla(P_k^n\eta-P_k^n\mathcal{P}_h^1\eta))\,dt=0.\label{psi:diff:psi:bar}
\end{equation*}
 Apply the Cauchy-Schwarz inequality  to have 
\begin{align*}
|I_1|&=\Big|-\sum_{n=1}^N\int_{t_{n-1}}^{t_n}\Big\{{k_{n}}^{-1}(y_h^n(u)-y_h^{n-1}(u),\eta^{n-1}-P_k^n\mathcal{P}_h^1\eta)\Big\}\,dt\Big|\leq F_1\cdot F_2,
\end{align*}
where 
\begin{equation*}
F_1=\left(\sum_{n=1}^{N}\|y_h^n(u)-y_h^{n-1}(u)\|^2_{L^2(\Omega)}\right)^{\frac{1}{2}},\;\;\;\;
F_2=\left(\sum_{n=1}^{N}\|\eta^{n-1}-P_k^n\mathcal{P}_h^1\eta\|^2_{L^2(\Omega)}\right)^{\frac{1}{2}}.
\end{equation*}
 An application of Lemma \ref{fully:discrete:inter:lemma:1} yields
\begin{equation*}
F_1\leq C\Big(k^{\frac{1}{2}}h^{-1}\|y_0\|_{L^2(\Omega)}+\|\sigma\|_{L^{\infty}(0,T;L^2(\Omega))}\|\tau\|_{\mathfrak{B}[0,T]}+k^{\frac{1}{2}}\|u\|_{L^2(0,T;L^2(\Omega))}\Big).
\end{equation*}
To estimate $F_2$, we first use the triangle inequality  and Lemma \ref{rh:projection:operator} to have
\begin{align}
\|\eta^{n-1}-P_k^n\mathcal{P}_h^1\eta\|_{L^2(\Omega)}&\leq \|\eta^{n-1}-P_k^n\eta\|_{L^2(\Omega)}+\|P_k^n\eta-P_k^n\mathcal{P}_h^1\eta\|_{L^2(\Omega)}\nonumber\\&\leq\|\eta^{n-1}-P_k^n{\eta}\|_{L^2(\Omega)}+Ch^{2s}\| P_k^n{\eta}\|_{H^{1+s}(\Omega)}.\label{fully:main:eq:2}
\end{align}
We know that
\begin{equation}
\|\eta^{n-1}-P_k^n\eta\|_{L^2(\Omega)}\leq Ck_n^{\frac{1}{2}}\|\eta_t\|_{L^2(I_n;L^2(\Omega))},\label{fully:main:eq:3}
\end{equation}
and
\begin{equation}
\| P_k^n\eta\|_{H^{1+s}(\Omega)}\leq C k_n^{-\frac{1}{2}}\|\eta\|_{L^2(I_n;H^{1+s}(\Omega))}.\label{fully:main:eq:4}
\end{equation}
Using  (\ref{fully:main:eq:3})-(\ref{fully:main:eq:4}) in (\ref{fully:main:eq:2}), we get
\begin{align*}
|F_2|&\leq C\Big(\sum_{n=1}^{N}h^{4s}\|  P_k^n\eta\|_{H^{1+s}(\Omega)}^2+k_n\|\eta_t\|^2_{L^2(I_n;L^2(\Omega))}\Big)^{\frac{1}{2}}\nonumber\\&\leq C\Big(\sum_{n=1}^{N}h^{4s}k_{n}^{-1}\| P_k^n{\eta}\|_{L^2(I_n;H^{1+s}(\Omega))}^2+k_{n}\|\eta_t\|^2_{L^2(I_n;L^2(\Omega))}\Big)^{\frac{1}{2}}\nonumber\\&\leq C(h^{2s}k^{-\frac{1}{2}}+k^{\frac{1}{2}})\|g\|_{L^2(0,T;L^2(\Omega))},
\end{align*}
the last inequality is obtained by use of  Proposition \ref{psi:stability}.
Combine the bounds of $F_1$ and $F_2$ we find that
\begin{align}
|I_1|&\leq C(h^{2s}k^{-\frac{1}{2}}+k^{\frac{1}{2}}) \Big(k^{\frac{1}{2}}h^{-1}\|y_0\|_{L^2(\Omega)}+\|\sigma\|_{L^{\infty}(0,T;L^2(\Omega))}\|\tau\|_{\mathfrak{B}[0,T]}\nonumber
\\&\quad\quad+k^{\frac{1}{2}}\|u\|_{L^2(0,T;L^2(\Omega))}\Big)\|g\|_{L^2(0,T;L^2(\Omega))}.\label{required:for:estimate:1}
\end{align}
For $I_2$, we observe that 
\begin{align}
|I_{2}|&=\Big|\langle\sigma\tau,\eta\rangle_{\Omega_T}-\sum_{n=1}^N\int_{t_{n-1}}^{t_n} \langle \sigma\tau,P_k^n\mathcal{P}_h^1\eta\rangle_{I_n}\,dt\Big|\notag\\
&=\Big|\sum_{n=1}^{N}\int_{t_{n-1}}^{t_n}\int_{\Omega}\sigma(x,t)(\eta-P_k^n\mathcal{P}_h^1\eta)(x)\,dxd\tau(t)\Big|\nonumber\\
&\leq  C\|\sigma\|_{L^{\infty}(0,T;L^2(\Omega))}\|\tau\|_{\mathfrak{B}[0,T]}\|\eta-P_k^n\mathcal{P}_h^1\eta\|_{L^{\infty}(0,T;L^2(\Omega))}.\label{fully:main:eq:6}
\end{align}

\noindent
Since 
\begin{align}
\|\eta-P_k^n\mathcal{P}_h^1\eta\|_{L^{\infty}(0,T;L^2(\Omega))}&\leq \|\eta-P_k^n{\eta}\|_{L^{\infty}(0,T;L^2(\Omega))}+\|P_k^n\eta-P_k^n\mathcal{P}_h^1\eta\|_{L^{\infty}(0,T;L^2(\Omega))}\nonumber\\
&\leq Ck^{\frac{1}{2}}\|\eta_t\|_{L^2(0,T;L^2(\Omega))}+Ch^{s}\|P_k^n\eta\|_{L^{\infty}(0,T;H^1(\Omega))}\nonumber\\
&\leq C(k^\frac{1}{2}+h^{s})\|g\|_{L^2(0,T;L^2(\Omega))},\label{fully:main:eq:7}
\end{align}
substitution of  (\ref{fully:main:eq:7}) in (\ref{fully:main:eq:6}) implies
\begin{align}
|I_2|
\leq  C(k^{\frac{1}{2}}+h^{s})\|\sigma\|_{L^{\infty}(0,T;L^2(\Omega))}\|\tau\|_{\mathfrak{B}[0,T]}\|g\|_{L^2(0,T;L^2(\Omega))}.\label{required:for:main:estimate:2}
\end{align}
For $I_3$, use of  duality pairing and Lemma \ref{projection:lh} to have
\begin{align}
|I_3|&\leq
 Ch^{s}\|y_0\|_{L^2(\Omega)}\|g\|_{L^2(0,T;L^2(\Omega))}.\label{required:for:main:estimate:3}
\end{align}
Finally, apply the Cauchy-Schwarz inequality to bound  $I_4$ as
\begin{align}
|I_4|
&\leq \|u\|_{L^2(0,T;L^2(\Omega))}\|\eta-P_k^n\mathcal{P}_h^1\eta\|_{L^2(0,T;L^2(\Omega))}\nonumber\\
&\leq C(h^{s}+k^{\frac{1}{2}})\|u\|_{L^2(0,T;L^2(\Omega))}\|g\|_{L^2(0,T;L^2(\Omega))},\label{required:for:main:estimate:5}
\end{align}
where we have used  $\|\eta- P_k^n \mathcal{P}_h^1\eta\|_{L^2(0,T;L^2(\Omega))}\leq C\|\eta-P_k^n\mathcal{P}_h^1\eta\|_{L^\infty(0,T;L^2(\Omega))}$ and (\ref{fully:main:eq:7}). 
Combine (\ref{required:for:estimate:1}), (\ref{required:for:main:estimate:2})-(\ref{required:for:main:estimate:5}) together with (\ref{rearranging:fully:discrete:1}), and the definition of  $L^2(0,T;L^2(\Omega))$-norm produces the desired estimate (\ref{fuu:es1}).

\noindent
 To prove the estimate  (\ref{fuu:es2}), first we introduce the following auxiliary problem: For $\tilde{g}\in L^2(0,T;L^2(\Omega))$, find $\xi\in \hat{\mathcal{X}}(0,T)$ such that 
 \begin{align}
 \begin{cases}
 \frac{\partial \xi}{\partial t}-\Delta \xi=\tilde{g}\;\;\text{in}\;\Omega_T,\\
 \xi=0\;\;\;\text{on}\;\Gamma_T,\\
 \xi(\cdot,0)=0\;\;\text{in}\;\;\;\Omega.
 \end{cases}\label{rev:1}
 \end{align} 
An application  of Proposition \ref{psi:stability} gives
\begin{align}
\|\xi\|_{H^1(0,T;L^2(\Omega))}+\|\xi\|_{L^2(0,T; H^{1+s}(\Omega))}\leq C_R\|\tilde{g}\|_{L^2(0,T;L^2(\Omega))}.\label{xi:stability}
\end{align}
 Set $\tilde{g}=\phi-\Phi_h(u)$ in (\ref{rev:1}). Then multiply  the resulting equation by $\phi$  and use integration by parts  formula to have
 \begin{align}
 \int_{\Omega_T}(\phi&-\Phi_h(u))\tilde{g}\,dxdt=\int_{0}^T\int_{\Omega}(\phi-\Phi_h(u))(\frac{\partial \xi}{\partial t}-\Delta \xi)\,dxdt\notag\\
 &=(\phi,\frac{\partial \xi}{\partial t})_{\Omega_T}+(\nabla \phi,\nabla \xi)_{\Omega_T}-\sum_{n=1}^N\int_{t_{n-1}}^{t_n}\Big\{k_n^{-1}(\phi_h^{n-1}(u),\xi^n-\xi^{n-1})+(\nabla \phi_h^{n-1}(u),\nabla \xi)\Big\}\,dt\notag\\
  &=(y-y_d,\xi)_{\Omega_T}+\sum_{n=1}^N\int_{t_{n-1}}^{t_n}\Big\{k_n^{-1}(\phi_h^{n}(u)-\phi_h^{n-1}(u),\xi^n)-(\nabla \phi_h^{n-1}(u),\nabla \xi)\Big\}\,dt,\label{rev:c:1}
 \end{align}
where  in the last step we have  utilized (\ref{adjoint:eq:in:time}). 
 Notice that, from  (\ref{inter:fully:discrete:costate})  we get 
 \begin{align}
 -\sum_{n=1}^N(D_{k_n}\phi_h^n(u),P_k^n\mathcal{P}_h^1\xi)+(\nabla \phi_h^{n-1}(u),\nabla P_k^n\mathcal{P}_h^1\xi)=\sum_{n=1}^N(y_h^n(u)-P_k^ny_d,P_k^n\mathcal{P}_h^1\xi).\label{rev:c:2}
 \end{align}
 Utilize  (\ref{rev:c:1}) and (\ref{rev:c:2}) to obtain
 \begin{align}
  \int_{\Omega_T}(\phi&-\Phi_h(u))\tilde{g}\,dxdt=(y-y_d,\xi)_{\Omega_T}-\sum_{n=1}^N\int_{t_{n-1}}^{t_n}(y_h^n(u)-P_k^ny_d,P_k^n\mathcal{P}_h^1\xi)\,dt\notag\\\quad&+\sum_{n=1}^N\int_{t_{n-1}}^{t_n}\Big\{k_n^{-1}(\phi_h^n(u)-\phi_h^{n-1}(u),\xi^n-P_k^n\mathcal{P}_h^1\xi)-(\nabla \phi_h^{n-1}(u),\nabla(\xi-P_k^n\mathcal{P}_h^1(u)))\Big\}\,dt\notag\\&=:\tilde{I}_1+\tilde{I}_2.\label{rev:c:3}
 \end{align}
 The estimate of  $\tilde{I}_2$ follows by argument similar to the  proof of $I_1$ and hence use of (\ref{xi:stability}) leads to
 \begin{align}
| \tilde{I}_2|\leq C(h^{2s}k^{-\frac{1}{2}}+k^{\frac{1}{2}})\|\tilde{g}\|_{L^2(0,T;L^2(\Omega)}\Big(\|y_h^n(u)\|_{L^2(0,T;L^2(\Omega))}+\|P_k^n y_d\|_{L^2(0,T;L^2(\Omega))}\Big),\label{rev:I1}
 \end{align}
 where we have utilized the stability estimate of (\ref{inter:fully:discrete:costate}), which is easily obtained by an application of Lemma \ref{fully:discrete:inter:lemma:1}, is stated as
 \begin{align*}
 \sum_{n=1}^N\|\phi_h^n(u)-\phi_h^{n-1}(u)\|_{L^2(\Omega)}^2+Ck\|\phi_h^0(u)\|_{H^1(\Omega)}^2\leq Ck_n\Big(\|y_h^n(u)\|_{L^2(0,T;L^2(\Omega))}+\|P_k^ny_d\|_{L^2(0,T;L^2(\Omega))}\Big).
 \end{align*}
 An application of the Cauchy-Schwarz inequality together with  a simple calculation  to have
 \begin{align}
 |\tilde{I}_1|&=|(y-y_d,\xi)_{\Omega_T}-\sum_{n=1}^N\int_{t_{n-1}}^{t_n}(y_h^n(u)-P_k^n y_d,P_k^n\mathcal{P}_h^1\xi)\,dt|\notag\\&\leq \Big(\|y-y_h^n(u)\|_{L^2(0,T;L^2(\Omega))}+\|y_d-P_k^n y_d\|_{L^2(0,T;L^2(\Omega))}\Big)
 \|\xi\|_{L^2(0,T;L^2(\Omega))}\notag\\&\quad+\Big(\|y_h^n(u)\|_{L^2(0,T;L^2(\Omega))}+\|P_k^ny_d\|_{L^2(0,T;L^2(\Omega))}\Big)\|\xi-P_k^n\mathcal{P}_h^1\xi\|_{L^2(0,T;L^2(\Omega))}.%\\
 %&\leq (h^s+k^{\frac{1}{2}}) 
 \end{align}
 Utilize (\ref{time:pro}),  $\|\xi-P_k^n\mathcal{P}_h^1\xi\|_{L^2(0,T;L^2(\Omega))}\leq C\|\xi-P_k^n\mathcal{P}_h^1\xi\|_{L^{\infty}(0,T;L^2(\Omega))}$  and (\ref{fully:main:eq:7}) to have
 \begin{align}
 |\tilde{I}_1|&\leq C\Big(\|y-y_h^n(u)\|_{L^2(0,T;L^2(\Omega))}+k\|y_d\|_{H^1(0,T;L^2(\Omega))}\Big)\|\tilde{g}\|_{L^2(0,T;L^2(\Omega))}\notag\\&\quad+C(h^s+k^{\frac{1}{2}})\|\tilde{g}\|_{L^2(0,T;L^2(\Omega))}\Big(\|y_h^n(u)\|_{L^2(0,T;L^2(\Omega))}+\|P_k^ny_d\|_{L^2(0,T;L^2(\Omega))}\Big)\label{rev:I2}
 \end{align}
 Combine (\ref{rev:c:3}), (\ref{rev:I1}), (\ref{rev:I2}) and use the definition of $L^2(0,T;L^2(\Omega))$-norm  to obtain (\ref{fuu:es2}). This completes the proof.
 \end{proof}
 
 \noindent
 To estimate the error in the control variable, it is required to introduce the  completely discretized control problem as
\begin{align}
\mathcal{J}_{h,k}(u_{\rho})=J_{h,k}(y_{\rho},u_{\rho}) \;\;\text{subject to}\;\; u_{\rho}\in \mathbb{U}_{ad}^n,\label{fully:control}
\end{align}
 where the discretization parameters $h$ and $k$ are gathered under the subscript $\rho$. 
The unique solution $u_{\rho}$ of (\ref{fully:control}) satisfies the optimality condition 
\begin{align}
\mathcal{J}_{h,k}'(u_{\rho})(\tilde{u}_{\rho}-u_{\rho})=\sum_{n=1}^{N}\int_{t_{n-1}}^{t_n}(\Lambda u_{\rho}+\phi_{\rho})(\tilde{u}_{\rho}-u_{\rho})\,dt\geq 0\;\;\;\;\forall \tilde{u}_{\rho}\in \mathbb{U}_{ad}^n.\label{fully:rho}
\end{align} 
Define the   $L^2$-projection $\Pi^n_d:L^2(0,T;L^2(\Omega))\rightarrow \mathbb{U}_{ad}^n$. Notice that $\Pi^n_d \mathbb{U}_{ad}\subset \mathbb{U}_{ad}^n$, $n\in[1:N]$.
 
 We  now prove the following error bound for the control variable.
\begin{theorem}\label{fully:discrete:error:in:control:time}
Let $u$ and $u_{\rho}$ be the solutions of (\ref{funct:weak}) and (\ref{fully:control}), respectively.  Consider the sufficient  optimality condition (\ref{opsc1}) is true. Then, for $s\in(\frac{1}{2},\beta)$, the estimate 
\begin{equation*}
\|u-u_{\rho}\|_{L^2(0,T;L^2(\Omega))}\leq \tilde{C}\frac{h^{s}}{\sqrt{\gamma}}+\overline{C}\frac{(h^{2s}k^{-\frac{1}{2}}+k^{\frac{1}{2}}+h^{s})}{\gamma} 
\end{equation*}
is valid. Here, $\tilde{C}$   and $\bar{C}$ are 
given by
\begin{align} 
\tilde{C}&:=C\Big(\|\sigma\|_{L^{\infty}(0,T;L^2(\Omega))},\; \|\tau\|_{\mathfrak{B}[0,T]},\;\|y_0\|_{L^2(\Omega)},\;\|y_d\|_{L^2(0,T;L^2(\Omega))},\Lambda\Big).\label{tildec}\\
\overline{C}&=C\Big(\|y_0\|_{L^2(\Omega)},\;\|\sigma\|_{L^\infty(0,T;L^2(\Omega))},\;\|\tau\|_{\mathfrak{B}[0,T]},\|u\|_{L^2(0,T;L^2(\Omega))},\;\|y_d\|_{H^1(0,T;L^2(\Omega))},T\Big).\label{cbar}
\end{align}

\end{theorem}
\begin{proof}
To prove this,  we  introduce an auxiliary problem as follows: Find $\tilde{u}_h^n\in \mathbb{U}_{ad}^n$ such that
\begin{equation}
\min_{u_{h}^{n}\in \mathbb{U}_{ad}^n} \mathcal{J}(u_{h}^n),\label{control:eq3}
\end{equation}
here, only the control variable is discretized. 
As a result, the optimality condition 
\begin{align}
\mathcal{J}'(\tilde{u}_h^n)(\Pi_d^n u-\tilde{u}_h^n)\geq 0\;\;\;\;\forall\Pi_d^n u\in\mathbb{U}_{ad}^n,\label{inter:control}
\end{align}
is satisfied.
We decompose the error as follows:
\begin{equation}
u-u_{\rho}=(u-\tilde{u}_{h}^n)+(\tilde{u}_{h}^n-u_{\rho}). \label{control:eq4}
\end{equation}
To estimate the first term of (\ref{control:eq4}), utilize (\ref{opsc1})  for any $\tilde{u}\in U_{ad}$ to have
\begin{align*}
\gamma\|u-\tilde{u}_{h}^n\|^{2}_{L^2(0,T;L^2(\Omega))}&\leq \mathcal{J}''(\tilde{u})(u-\tilde{u}_{h}^n,u-\tilde{u}_{h}^n)\\
&=\mathcal{J}'(u)(u-\tilde{u}_{h}^n)-\mathcal{J}'(\tilde{u}_{h}^n)(u-\tilde{u}_{h}^n)\\&=\mathcal{J}'(u)(u-\tilde{u}_{h}^n)-
\mathcal{J}'(\tilde{u}_{h}^n)(u-\Pi_d^n u)-\mathcal{J}'(\tilde{u}_{h}^n)(\Pi_d^n u-\tilde{u}_{h}^n).
\end{align*}
An application of (\ref{optimality:condition:exp})  and (\ref{inter:control}) yields 
\begin{equation*}
\mathcal{J}'(u)(u-\tilde{u}_{h}^n)\leq 0\;\;\;\text{and}\;\;\;-\mathcal{J}'(\tilde{u}_{h}^n)(\Pi_d^n u-\tilde{u}_{h}^n)\leq 0.
\end{equation*}
 The properties of $\Pi_d^n$ and the  Young's inequality result in
\begin{align*}
\gamma\|u-\tilde{u}_{h}^n\|^{2}_{L^2(0,T;L^2(\Omega))}&\leq-\mathcal{J}'(\tilde{u}_{h}^n)(u-\Pi_d^n{u})\\&=-\int_{0}^{T}\Big(\Lambda\tilde{u}_{h}^n+\phi(\tilde{u}_{h}^n),u-\Pi_d^n u\Big)\,dt
\\&=-\int_{0}^{T}(\phi(\tilde{u}_{h}^n)-\Pi_d^n \phi(\tilde{u}_{h}^n),u-\Pi_d^n u)\,dt
\\&\leq\int_{0}^{T}\Big\{\frac{1}{2}\|\phi(\tilde{u}_{h}^{n})
-\Pi_d^n \phi(\tilde{u}_{h}^n)\|^{2}_{L^2(\Omega)}+\frac{1}{2}\|u-\Pi_d^n u\|^{2}_{L^2(\Omega)}\Big\}\,dt.
\end{align*}
Use of  Lemma \ref{projection:lh} gives
\begin{align*}
\|u-\tilde{u}_{h}^n\|_{L^2(0,T;L^2(\Omega))}&\leq \int_{0}^{T}\Big\{\frac{C}{\sqrt{\gamma}}h^{s}\| \phi(\tilde{u}_{h}^n)\|_{H^1(\Omega)}+\frac{C}{\sqrt{\gamma}}h^{s}\|{u}\|_{H^1(\Omega)}\Big\}\,dt
\leq  \frac{\tilde{C}}{\sqrt{\gamma}}h^{s}
\end{align*}
with $\tilde{C}$ is defined in (\ref{tildec}). 
Using the optimality condition (\ref{fully:rho}),
we notice that
\begin{align*}
\mathcal{J}_{h,k}'(u_{\rho})(u_{\rho}-\tilde{u}_h^n) \; \leq \;0 \;\leq \;\mathcal{J}'(\tilde{u}_{h}^n)(u_{\rho}-\tilde{u}_{h}^n).
\end{align*}
We have the second-order optimality condition for the problem (\ref{fully:control}) as
\begin{align}
\mathcal{J}_{h,k}''(u_{\rho})(\hat{u}_h^n,\hat{u}_h^n)\geq \gamma\|\hat{u}_h^n\|^{2}_{L^2(0,T;L^2(\Omega))}.\label{control:eq2}
\end{align}
Utilize (\ref{control:eq2}) for any  $\hat{u}_{h}^n\in \mathbb{U}_{ad}^n$ to obtain
\begin{align*}
\gamma\|u_{\rho}-\tilde{u}_{h}^n\|^{2}_{L^2(0,T;L^2(\Omega))}&\leq \mathcal{J}_{h,k}''(\hat{u}_h^n)(u_{\rho}-\tilde{u}_{h}^n,{u}_{\rho}-\tilde{u}_{h}^n)\\&= \mathcal{J}_{h,k}'(u_{\rho})(u_{\rho}-\tilde{u}_{h}^n)- \mathcal{J}_{h,k}'(\tilde{u}_{h}^n)(u_{\rho}-\tilde{u}_{h}^n)\\&\leq \mathcal{J}'(\tilde{u}_h^n)(u_{\rho}-\tilde{u}_{h}^n)-\mathcal{J}_{h,k}'(\tilde{u}_{h}^n)(u_{\rho}-\tilde{u}_{h}^n)
\\&\leq \overline{C}(h^{2s}k^{-\frac{1}{2}}+k^{\frac{1}{2}}+h^{s})\|{u}_{\rho}-\tilde{u}_{h}^n\|_{L^2(0,T;L^2(\Omega))},
\end{align*}
where the last step follows by  using (\ref{optimality:condition:exp}), (\ref{fully:rho}) and  (\ref{fuu:es2}), which completes the rest of the proof.
\end{proof}
Now, we are in a position to estimate the error in the state variable in the $L^2(0,T;L^2(\Omega))$-norm. 
\begin{theorem}\label{theorem:for:state}
Under the assumption of Theorem \ref{stability}, let
$y$ and ${Y}_h$ be the solutions  (\ref{state:control:eq:1}), and  (\ref{fully:discrete:state}), respectively. So,  for $s\in(\frac{1}{2},\beta)$ the following is true:
\begin{align*}
\|y-{Y}_h\|_{L^2(0,T;L^2(\Omega))}&\leq C(h^{2s}k^{-\frac{1}{2}}+k^{\frac{1}{2}}+h^{s})\Big\{\|y_0\|_{L^2(\Omega)}+\|\sigma\|_{L^{\infty}(0,T;L^2(\Omega))}\|\tau\|_{\mathfrak{B}[0,T]}+\|u\|_{L^2(0,T;L^2(\Omega))}\Big\}\nonumber\\&\quad\quad+C\|u-u_h^n\|_{L^2(0,T;L^2(\Omega))}.
\end{align*}
\end{theorem}
\begin{proof}
Let $\eta$ solves the problem (\ref{adjoint}) with $g\in L^2(0,T;L^2(\Omega))$. The duality argument with (\ref{state:control:eq:1}) leads to the assertion that 
\begin{align}
\int_{\Omega_T}(y-{Y}_h)g\,dxdt&=\int_0^T\int_{\Omega}(y-{Y}_h)(-\frac{\partial \eta}{\partial t}-\Delta\eta)\,dxdt\nonumber
\\&=-(y,\frac{\partial \eta}{\partial t})_{\Omega_T}+(\nabla y,\nabla\eta)_{\Omega_T}+\sum_{n=1}^{N}\int_{t_{n-1}}^{t_n}\{(y_h^n,\frac{\partial \eta}{\partial t})-(\nabla y_h^n,\nabla \eta)\}\,dt\nonumber
\\
&=-\sum_{n=1}^{N}\int_{t_{n-1}}^{t_n}\{{k_n}^{-1}(y_h^n-y_h^{n-1},\eta^{n-1})+(\nabla y_h^n,\nabla \eta)\}\,dt+(y_0-y_{h,0},\eta(\cdot,0))\nonumber\\
&\quad\quad+\langle\sigma\tau,\eta\rangle_{\Omega_T}+(u,\eta)_{\Omega_T}.\label{F:1}
\end{align}
From (\ref{fully:discrete:state}), we have 
\begin{equation}
\sum_{n=1}^{N}\{(D_{k_n}y_h^n,P_k^n\mathcal{P}_h^1\eta)+(\nabla y_h^n,\nabla P_k^n\mathcal{P}_h^1\eta)\}=\sum_{n=1}^{N}\langle\sigma\tau,P_k^n\mathcal{P}_h^1\eta\rangle_{I_n}+\sum_{n=1}^{N}(u_h^n,P_k^n\mathcal{P}_h^1\eta).\label{main:state:error:eq:1:2}
\end{equation}
 Utilize (\ref{F:1}) and  (\ref{main:state:error:eq:1:2}) to  achieve  that
\begin{align}
\int_{\Omega_T}(y-{Y}_h)g\,dxdt&=-\sum_{n=1}^{N}\int_{t_{n-1}}^{t_n}\Big\{k_{n}^{-1}(y_h^n-y_h^{n-1},\eta^{n-1}-P_k^n\mathcal{P}_h^1\eta)+(\nabla y_h^n,\nabla(P_k^n\eta-P_k^n\mathcal{P}_h^1\eta))\Big\}\,dt\nonumber\\&\quad\quad+\Big\{\langle\sigma\tau,\eta\rangle_{\Omega_T}-\sum_{n=1}^{N}\int_{t_{n-1}}^{t_n}\langle\sigma\tau,P_k^n\mathcal{P}_{h}^1\eta\rangle_{I_n}dt\Big\}+\{(y_0-y_{h,0},\eta(\cdot,0))\}\nonumber\\&\quad\quad+\Big\{(u,\eta)_{\Omega_T}-\sum_{n=1}^{N}\int_{t_{n-1}}^{t_n}(u_h^n,P_k^n\mathcal{P}_h^1\eta)\,dt\Big\}\nonumber\\&=:I_1+I_2+I_3+\tilde{I}_4.\label{rearranging:fully:discrete}
\end{align}
The bounds of $I_1,\;I_2,\;I_3$ are found in Lemma \ref{inter:error:estimate:theorem:for:state:time}. Now, we estimate  $\tilde{I}_4$ as follows
 \begin{align}
|\tilde{I}_4|
&=\Big|\sum_{n=1}^{N}\int_{t_{n-1}}^{t_n}(u,\eta)dt-\sum_{n=1}^{N}\int_{t_{n-1}}^{t_n}(u_h^n,P_k^n\mathcal{P}_h^1\eta)\,dt\Big|\nonumber\\
&=\Big|\sum_{n=1}^{N}\int_{t_{n-1}}^{t_n}(u,\eta-P_k^n\mathcal{P}_h^1\eta)dt+\sum_{n=1}^{N}\int_{t_{n-1}}^{t_n}(u-u_h^n,P_k^n\mathcal{P}_h^1\eta)\,dt\Big|.\nonumber
\end{align}
The Cauchy-Schwarz inequality, Poincar\'{e} inequality, Proposition \ref{psi:stability} and  (\ref{fully:main:eq:7}) together with $\|\nabla P_k^n{\mathcal{P}}_h^1\eta\|_{L^2(0,T;L^2(\Omega))} \leq C\|\nabla\eta\|_{L^2(0,T;L^2(\Omega))}$   implies
\begin{align*}
|\tilde{I}_4|&\leq  C\Big(\|u\|_{L^2(0,T;L^2(\Omega))}\|\eta-P_k^n\mathcal{P}_h^1\eta\|_{L^\infty(0,T;L^2(\Omega))}+\|u-u_h^n\|_{L^2(0,T;L^2(\Omega))}\|\nabla\eta\|_{L^2(0,T;L^2(\Omega))}\Big)\nonumber\\
&\leq  C\Big\{(h^{s}+k^{\frac{1}{2}})\|u\|_{L^2(0,T;L^2(\Omega))}+\|u-u_h^n\|_{L^2(0,T;L^2(\Omega))}\Big\}\|g\|_{L^2(0,T;L^2(\Omega))},
\end{align*}
where we have used $\|\eta-P_k^n\mathcal{P}_h^1\eta\|_{L^2(0,T;L^2(\Omega))}\leq C\|\eta-P_k^n\mathcal{P}_h^1\eta\|_{L^\infty(0,T;L^2(\Omega))}$.
Combining all the estimates of $I_1$, $I_2$, $I_3$ and $\tilde{I}_4$ together with (\ref{rearranging:fully:discrete}) and the definition of $L^2(0,T;L^2(\Omega))$-norm yields the desired estimate.
\end{proof}
%%%%%%%%%%%%%%%%%%%%%%%%%%%%%%%%%%%%%%%%%%%%%%%%%%%%%%%%%%%%%%%%%%%%%%%%%%
The next theorem is devoted to the error estimate for the co-state variable. We omit the details of the proof. 
\begin{theorem}\label{final:thm:co}
Let $\phi$ and $\Phi_h$ be the solutions of (\ref{adjoint:eq:in:time}) and (\ref{fully:discrete:costate})-(\ref{fully:final:time}), respectively. Then, we have
\begin{align*}
\|\phi-\Phi_h\|_{L^2(0,T;L^2(\Omega))}&\leq C(h^{2s}k^{-\frac{1}{2}}+k^{\frac{1}{2}}+h^s)\Big\{\|y_h^n\|_{L^2(0,T;L^2(\Omega))}+\|P_k^ny_d\|_{L^2(0,T;L^2(\Omega))}\Big\}\nonumber\\&\quad\quad+k\|y_{d,t}\|_{L^2(0,T;L^2(\Omega))}+\|y-{Y}_h\|_{L^2(0,T;L^2(\Omega))}.
\end{align*}
\end{theorem}

\begin{remark} With $k=\mathcal{O}(h^{2s})$, Theorems \ref{fully:discrete:error:in:control:time},  \ref{theorem:for:state} and \ref{final:thm:co} yield the following error estimate

%We conclude from Theorems \ref{fully:discrete:error:in:control:time},  \ref{theorem:for:state} and \ref{final:thm:co}. Under the assumption $k=\mathcal{O}(h^{2s})$, we have the following estimate
\begin{align*}
\|y-{Y}_h\|_{L^2(0,T;L^2(\Omega))}+\|u-u_{{\rho}}\|_{L^2(0,T;L^2(\Omega))}+\|\phi-\Phi_h\|_{L^2(0,T;L^2(\Omega))}\leq Ch^{s},
\end{align*}
where $\rho$   acquires  the discretization parameters $h,\,k$.
\end{remark}
%%%%%%%%%%%%%%%%%%%%%%%%%%%%%%%%%%%%%%%%%%%%%%%%%%%%%%%%%%%%%%%%%%%%%%%%%%%%%%%%
\section{A posteriori error estimates}\label{apos}
The a posteriori error bounds for the  control, co-state, and state variables are derived in this section.

 For $n\in[1:N]$,   $Y_h^0(x)=y_{h,0}(x)$ and $\Phi^N_h(x)=0$,  we recast the optimality conditions (\ref{fully:optimal:state})- (\ref{fully:discrete:optimality:condition}) in terms of $Y_h$, $\Phi_h$, $U_h$ as
\begin{align}
(D_{k_n}Y_h^n,w_h)+(\nabla Y_h^n,\nabla w_h)&=\langle\sigma\tau,w_h\rangle_{I_n}+(U_h,w_h)\;\;\forall w_h\in \mathbb{W}_h^n,\label{ful:state:2:1}\\
-(D_{k_n} \Phi_h^n,w_h)+(\nabla \Phi_h^{n-1},w_h)&=(Y_h^n-P_k^ny_{d},w_h)\;\;\forall w_h\in \mathbb{W}_h^n,\label{ful:costate:2:1}\\
(\Lambda U_h+\Phi_h^{n-1},\hat{u}_h^n-U_h)&\geq 0\;\;\;\;\forall \hat{u}_h^n\in \mathbb{U}_{ad}^{n}.\label{ful:optimality:condition:2:1}
\end{align}
In the following, we recall two  lemmas that are essential for obtaining a posteriori error bounds. 
 \begin{lemma}\label{lagrange:K}
\cite{ciar78}. For $w\in H^{1+s}(K)$, let $K\in\mathcal{T}_h$ 
and $m=0\; \text{or}\;1$.
 Then
\begin{align*}
\|w-\Pi_h w\|_{H^{m}(K)}\leq C_{I,m}h_{K}^{1+s-m}|w|_{H^{1+s}(K)}\;\;\forall w\in H^{1+s}(K),
\end{align*}
where  $\Pi_h:\mathcal{C}_0(\overline{\Omega})\rightarrow \mathbb{W}_h^0$ be the nodal interpolation operator.
\end{lemma}
\begin{lemma}\label{lagrange:l}
\cite{kufner77} For $K\in\mathcal{T}_h$, $1\leq p<\infty$, we have
\begin{align*}
\|w\|_{L^{p}(e)}\leq C_{I,e}(h_K^{-1/p}\|w\|_{L^{p}(K)}+h_K^{1-1/p}|w|_{W^{1,p}(K)})\;\;  \forall w\in W^{1,p}(\Omega).
\end{align*}
\end{lemma}
\noindent
The error between $y$ and $Y_h$ is determined 
with the help of intermediate error estimates. For this, we introduce the auxiliary problems:
For $U_h\in \mathbb{U}_{ad}^n$, let  $y(U_h)\in W(0,T)$ be the solution of
\begin{align}
-(y(U_h), \frac{\partial w}{\partial t})_{\Omega_T}+(\nabla y(U_h),\nabla w)_{\Omega_T}&=\langle \sigma\tau,w\rangle_{\Omega_T}+(y_0,w(\cdot,0))+(U_h,w)_{\Omega_T}\;\;\;\;\forall w\in \mathcal{X}(0,T)\label{inter:state:time}
\end{align}
with $w(\cdot,T)=0$, and let $\phi(U_h)\in \hat{\mathcal{X}}(0,T)$ satisfy
\begin{align}
\begin{cases}
-(\frac{\partial \Phi(U_h)}{\partial t}, w)_{\Omega_T}+(\nabla \phi(U_h),\nabla w)_{\Omega_T}=(y(U_h)-y_{d},w)_{\Omega_T}\;\;\;\;\forall w\in L^2(0,T;H^1_0(\Omega)),\\
\phi(U_h)(\cdot,T)=0.
\end{cases}\label{inter:adjoint:time}
\end{align}
To find the error bounds,  we first split the errors and use triangle inequality to have
\begin{align}
\|Y_h^n-y\|_{L^2(0,T;L^2(\Omega))}&\leq  \|Y_h^n-y(U_h)\|_{L^2(0,T;L^2(\Omega))}+\|y(U_h)-y\|_{L^2(0,T;L^2(\Omega))},\label{Y:n}\\
\|\Phi_h-\phi\|_{L^2(0,T;L^2(\Omega))}&\leq  \|\Phi_h-\phi(U_h)\|_{L^2(0,T;L^2(\Omega))}+\|\phi(U_h)-\phi\|_{L^2(0,T;L^2(\Omega))}\label{phi:n}.
\end{align}

To begin with we first determine the error bound for the control variable. 
\begin{lemma}\label{apos:for:q:space}
Let $(y,u,\phi)$ be the solution of (\ref{funct:weak})-\ref{adjoint:eq:in:time}),  
 and let $(Y_h,U_h,\Phi_h)$ be the solution of (\ref{ful:state:2:1})-(\ref{ful:optimality:condition:2:1}). 
Assume that  $(\Lambda {U}_h+\Phi_h^{n-1})|_{K}\in H^{1}(K)$  and for $\tilde{u}\in \mathbb{U}_{ad}$, the following
 \begin{equation}
\Big|\int_{0}^{T}(\Lambda{U}_h+\Phi_{h}^{n-1},\tilde{u}-U_h)\,dt\Big|\leq C_{1}\int_{0}^{T}\sum_{K\in\mathcal{T}_h}h_K|\Lambda U_{h}+\Phi_{h}^{n-1}|_{H^1(K)}\|u-U_{h}\|_{L^2(K)}\,dt\label{tildeu:u}
\end{equation}
holds for some positive constant $C_1$.  
Then, we have
\begin{equation}
\|u-U_{h}\|_{L^2(0,T;L^2(\Omega))}^2\leq C_{2}^2\Big(\xi_1^n+\|\Phi_h-\phi(U_{h})\|_{L^2(0,T;L^2(\Omega))}^2\Big),\label{result:op}
\end{equation}
where $C_{2}=\sqrt{\frac{3}{2}}\max\{1,C_{1}\}$, $\xi_{1}^n:=\Big(\int_{0}^{T}\displaystyle\sum_{K\in\mathcal{T}_h}h_{K}^2|\Lambda U_h+\Phi_h^{n-1}|^2_{H^1(K)}\,dt\Big),$
and $\phi(U_h)$ be the solution of (\ref{inter:adjoint:time}).
\end{lemma}

\begin{proof}
 Inviting the optimality condition  (\ref{optimality:condition:exp}) we obtain
 \begin{eqnarray}
(\Lambda u,u-U_h)\leq-(\phi,u-U_h).\label{star}
\end{eqnarray}
Then use of (\ref{star}) results in
\begin{align}
\int_{0}^{T}\|u-U_h\|_{L^2(\Omega)}^2\,dt
\leq \int_0^T\{-(\frac{\phi}{\Lambda},u-U_h)-(U_h,u-U_h)\}\,dt.\nonumber
\end{align}
And hence
\begin{align}
\Lambda\int_{0}^{T}\|u-U_h\|_{L^2(\Omega)}^2\,dt&=\int_0^T\{-(\phi,u-U_h)-(\Lambda U_h,u-U_h)\}\,dt\nonumber\\
&=-\int_{0}^{T}(\Phi_h^{n-1}+\Lambda U_h,u-\tilde{u}_h^n)\,dt-\int_{0}^{T}(\Lambda U_h+\Phi_h^{n-1},\tilde{u}_h^n-U_h)\,dt\nonumber\\
&\quad\quad+\int_{0}^T(\Phi_h^{n-1}-\phi(U_h),u-U_h)\,dt+\int_0^T(\phi(U_h)-\phi,u-U_h)\,dt.\nonumber
\end{align}
An application of (\ref{ful:optimality:condition:2:1}) yields
\begin{align}
\Lambda\int_{0}^{T}\|u-U_h\|_{L^2(\Omega)}^2\,dt&\leq\int_0^T(\Lambda U_h+\Phi_h^{n-1},\tilde{u}_h^n-u)\,dt+\int_{0}^T(\Phi_h^{n-1}-\phi(U_h),u-U_h)\,dt\nonumber\\&\quad\quad+\int_0^T(\phi(U_h)-\phi,u-U_h)\,dt\nonumber\\
&=: E_1+E_2+E_3.\label{i123}
\end{align}
It is clear from the assumption 
 (\ref{tildeu:u}) that
\begin{align}
|E_{1}|&=\Big|\int_0^T(\Lambda U_h+\Phi_h^{n-1},\tilde{u}_h^n-u)\,dt\Big|\nonumber\\
&\leq \int_0^T \Big\{\sum_{K\in\mathcal{T}_h} C_{1}\,h_K|\Lambda U_h+\Phi_h^{n-1}|_{H^1(K)}\|u-U_h\|_{L^2(K)}\Big\}dt\nonumber\\
&\leq\frac{3\,C_{1}^2}{4}\xi_{1}^n+\frac{1}{4}\|u-U_h\|^2_{L^2(0,T;L^2(\Omega))}.\label{i1}
\end{align}
Additionally, it is obvious that 
\begin{align}
|E_2|&=\Big|\int_0^T(\Phi_h^{n-1}-\phi(U_h),u-U_h)\,dt\Big|\nonumber\\
&\leq \frac{3}{4}\int_0^T\|\Phi_h^{n-1}-\phi(U_h)\|^2_{L^2(\Omega)}\,dt+\frac{1}{4}\int_{0}^{T}\|u-U_h\|_{L^2(\Omega)}^2\,dt.\label{i2}
\end{align}
Utilize (\ref{state:control:eq:1}) and (\ref{inter:state:time}) to write the expression of $E_3$ as
\begin{align*}
E_3&=\int_0^T(u-U_h,\phi(U_h)-\phi)\,dt\nonumber\\
&=\int_0^T\Big\{-(y-y(U_h),\frac{\partial \phi(U_h)}{\partial t}-\frac{\partial \phi}{\partial t})+(\nabla (y-y(U_h)),\nabla (\phi(U_h)-\phi))\Big\}\,dt,\nonumber
\end{align*}
which combine with (\ref{adjoint:eq:in:time}) and (\ref{inter:adjoint:time}) gives $E_3\leq 0$. The proof is accomplished by combining the estimates of $E_1,\;E_2$ and $E_3$. 
\end{proof}

\noindent
The main theorem of this section will be derived by  use of  intermediate error estimates provided in the next two lemmas. 
\begin{lemma}\label{inter:lemma:time:apos}
Under the assumption of Theorem \ref{stability}, let $y(U_h)\in W(0,T)$  and $Y_h^n\in \mathbb{W}_h^n$ be the solutions of  (\ref{inter:state:time}) and  
 (\ref{ful:state:2:1}), respectively.  Then, for $n\in[1:N]$  we have
\begin{align*}
\sum_{n=1}^N\int_{t_{n-1}}^{t_n}\|Y_h^n-y(U_h)\|_{L^2(\Omega)}^2\,dt\leq C_{3}^2\sum_{n=1}^{N}\{k_n(\xi_2^n+\xi_3^n+\xi_5^n)+\xi_4^n\},
\end{align*}
where
\begin{align*}
\xi_2^n&:=\sum_{K\in \mathcal{T}_h^n}h_K^{2(1+s)}\|k_n^{-1}(Y_h^n-Y_h^{n-1})-\Delta Y_h^n-U_h\|^2_{L^2(K)}+\sum_{e\in\mathcal{E}_h^n}h_e^{1+2s}\Big\|\jump{\frac{\partial Y_h^n}{\partial n}}\Big\|^2_{L^2(e)},\\
\xi_3^n &:=\|Y_h^n-Y_h^{n-1}\|_{L^2(\Omega)}^2,\\
\xi_4^n&:= \|y_0-Y_h^0\|_{L^2(\Omega)}^2,\\
\xi_5^n&:=\sum_{n=1}^N\sum_{K\in \mathcal{T}_{h}^n}\Big( h_K^2\|\sigma\|_{L^{\infty}(I_n;L^2(K))}^2+k_n\|\sigma-{P}_k^n\,\sigma\|_{L^{\infty}(I_n;L^2(K))}^2 \Big)\|\tau\|_{\mathfrak{B}(I_n)}^2,
\end{align*}
and $C_{3}=C_{R}\max\{1,C_{I},C_{I,0},C_{I,2},C_{I,3},C_{I,e}C_{I,0}\}$.
\end{lemma}
\begin{proof}
 Let $\eta$ be the solution of problem (\ref{adjoint}) with $g\in L^2(0,T;L^2(\Omega))$. It should be noted that  $\eta=0$ on $\partial\Omega$, $\eta^N=\eta(\cdot,T)=0$.  Use of (\ref{inter:state:time}) and integrating by parts yields
 \begin{align}
 &\int_{\Omega_T}(Y_h^n-y(U_h))g\,dxdt=\int_0^T\int_{\Omega}(Y_h^n-y(U_h))(-\frac{\partial \eta}{\partial t}-\Delta\eta)\,dxdt\nonumber\\
 &=(y(U_h),\frac{\partial \eta}{\partial t})_{\Omega_T}-(y(U_h),-\Delta\eta)_{\Omega_T}-\sum_{n=1}^{N}\int_{t_{n-1}}^{t_n}((Y_h^n,\frac{\partial \eta}{\partial t})-(\nabla Y_h^n,\nabla \eta))\,dt\nonumber\\
 &=-\langle\sigma\tau,\eta\rangle_{\Omega_T}+(Y_{h}^{0}-y_{0},\eta(\cdot,0))-(U_h,\eta)_{\Omega_T}\nonumber\\&\quad\quad+\sum_{n=1}^{N}\int_{t_{n-1}}^{t_n}\Big\{k_n^{-1}(Y_h^n-Y_h^{n-1},\eta^{n-1})+(\nabla Y_h^n,\nabla \eta)\Big\}\,dt.\label{inter:lemma:1:eq:1}
 \end{align}
We indicate  $\Pi_h^n$ as the Lagrange interpolation operator onto $\mathbb{W}_h^n$ and define $\eta_I$ such that $\eta_I|_{I_n}:=\Pi_h^{n}({P}_k^n\eta)\in \mathbb{W}_h^n$ for each time interval $I_n$. 
From equation (\ref{ful:state:2:1}), we obtain
\begin{align}
\sum_{n=1}^{N}\Big\{k_n^{-1}(Y_h^n-Y_h^{n-1},\eta_{I})+(\nabla Y_h^n,\nabla\eta_{I})\Big\}=\sum_{n=1}^{N}\langle\sigma\tau,\eta_I\rangle_{I_n}+\sum_{n=1}^{N}(U_h,\eta_{I}).\label{inter:lemma:1:eq:2}
\end{align}
 Then we have
 \begin{align}
 \int_{\Omega_T}(Y_h^n-y(U_h))g\,dxdt&=\sum_{n=1}^{N}\int_{t_{n-1}}^{t_n}k_n^{-1}(Y_h^n-Y_h^{n-1},\eta^{n-1}-\eta_{I})\,dt+\sum_{n=1}^{N}\int_{t_{n-1}}^{t_n}\{(\nabla Y_h^n,\nabla(\eta-\eta_{I}))\nonumber\\&\quad-(U_h,\eta-\eta_{I})\}\,dt
 -(y_0-Y_h^0,\eta(\cdot,0))-\langle\sigma\tau,\eta\rangle_{\Omega_T}+\sum_{n=1}^{N}\int_{t_{n-1}}^{t_n}\langle \sigma\tau,\eta_{I}\rangle_{I_n}\,dt\nonumber\\
 &=:\tilde{\mathfrak{E}}_1+\tilde{\mathfrak{E}}_2+\tilde{\mathfrak{E}}_3+\tilde{\mathfrak{E}}_4\label{inter:lemma:1:eq:3}
  \end{align}
 The terms $\tilde{\mathfrak{E}}_i|_{i=1,\ldots,4}$ are estimated separately.  
Using Lemma \ref{lagrange:K} and integration by parts we arrive at 
\begin{align}
|\tilde{\mathfrak{E}}_1|&=\Big|\sum_{n=1}^{N}\int_{t_{n-1}}^{t_n}k_n^{-1}(Y_h^n-Y_h^{n-1},\eta^{n-1}-{P}_k^n\eta+{P}_k^n(\eta-\Pi_h^n\eta))\,dt\Big|\nonumber\\
 &=\Big|\sum_{n=1}^{N}\int_{t_{n-1}}^{t_n}\int_{\Omega}k_n^{-1}(Y_h^n-Y_h^{n-1})(\eta^{n-1}-{P}_k^n\eta)\,dxdt\notag\\&\quad\quad+\sum_{n=1}^{N}\int_{t_{n-1}}^{t_n}\int_{\Omega}k_n^{-1}(Y_h^n-Y_h^{n-1}){P}_{k}^n(\eta-\Pi_h^n\eta)\,dxdt\Big|\nonumber\\
&\leq  \max\{C_{I},C_{I,0}\}\,\Big[k_n\Big(\|Y_h^n-Y_h^{n-1} \|^2_{L^2(\Omega)}\notag\\&\quad\quad+\sum_{K\in \mathcal{T}^h_n}h_{K}^{2(1+s)}\|k_n^{-1}(Y_h^n-Y_h^{n-1})\|^2_{L^2(K)}\Big)\Big]^{\frac{1}{2}}\|g\|_{L^2(0,T;L^2(\Omega))}, \label{inter:lemma:1:eq:4}
  \end{align}
  where we  have utilized the properties of $P_k^n$ and Proposition \ref{psi:stability}
 \begin{align*}
  &\|\eta^{n-1}-{P}_k^n\eta\|_{L^2(0,T;L^2(\Omega))}\leq C_{I}k_n\|\eta\|_{H^1(0,T;L^2(\Omega))},\\
 &\|{P}_k^n(\eta-\Pi_h^n\eta)\|_{L^2(\Omega)}\leq \|\eta-\Pi_h^n\eta\|_{L^2(\Omega)}\leq C_{I,0} \,h^{1+s}_K\|\eta\|_{L^2(0,T;H^{1+s}(\Omega))}.
 \end{align*}
 To estimate $\tilde{\mathfrak{E}}_2$, 
 the properties of $\Pi_h^n$ now yields
\begin{align}
|\tilde{\mathfrak{E}}_2|&=\Big|\sum_{n=1}^{N}\int_{t_{n-1}}^{t_n}\{(\nabla Y_h^n,\nabla(\eta-\eta_I))-(U_h,\eta-\eta_{I})\}\,dt\Big|\nonumber\\
&\leq\sum_{n=1}^{N}\int_{t_{n-1}}^{t_n}\Big(\sum_{K\in\mathcal{T}_h^n}\|-\Delta Y_h^n-U_h\|_{L^2(K)}\|\eta-\Pi_h^n\eta\|_{L^2(K)}\nonumber\\&\quad\quad+\sum_{e\in\mathcal{E}_h^n}\Big\|\jump{\frac{\partial Y_h^n}{\partial n}}\Big\|_{L^2(e)}\|\eta-\Pi_h^n\eta\|_{L^2(e)}\Big)\,dt\nonumber\\
&\leq \max\{C_{I,0},C_{I,e}C_{I,0}\}\Big[\sum_{n=1}^{N}k_n\Big(\sum_{K\in\mathcal{T}_h^n}h_{K}^{2(1+s)}\|-\Delta Y_h^n-U_h\|^2_{L^2(K)}\nonumber\\&\quad\quad+\sum_{e\in\mathcal{E}_h^n}h_e^{1+2s}\Big\|\jump{\frac{\partial Y_h^n}{\partial n}}\Big\|^2_{L^2(e)}\Big)\Big]^{\frac{1}{2}}\|\eta\|_{L^2(0,T;H^{1+s}(\Omega))}.\label{inter:lemma:1:eq:5}
\end{align}

\noindent 
The bound of $\tilde{\mathfrak{E}}_3$ is obtained by the application of the Cauchy-Schwarz inequality 
\begin{equation*}
|\tilde{\mathfrak{E}}_3|\leq \|Y_h^0-y_0\|_{L^2(\Omega)}\|\eta(\cdot,0)\|_{L^2(\Omega)}.\label{inter:lemma:1:eq:6}
\end{equation*}
 For $\tilde{\mathfrak{E}}_4$, adding and subtracting the suitable terms together with the definition of $P_k^n$ we obtain
\begin{align*}
|\tilde{\mathfrak{E}}_4|&=\Big|\langle\sigma\tau,\eta\rangle_{\Omega_T}-\sum_{n=1}^N\int_{t_{n-1}}^{t_{n}}\langle\sigma \tau,\eta_{I}\rangle_{I_n}\,dt\Big|
\\&=\Big|\sum_{n=1}^N\int_{\Omega}\int_{t_{n-1}}^{t_n}\sigma(x,t)\, \eta(x,t) d\tau(t)\,dx-\sum_{n=1}^{N}\int_{\Omega}\int_{t_{n-1}}^{t_n}\sigma(x,t)\Pi_h^n(P_k^n\eta)d\,\tau(t)\,dx\Big|.
\end{align*}
Hence
\begin{align}
|\tilde{\mathfrak{E}}_4|&=\Big|\sum_{n=1}^N\bigg\{\int_{\Omega}\int_{t_{n-1}}^{t_n}\sigma(x,t)(\eta-P_k^n\eta)\,d\tau(t)dx+\int_{\Omega}\int_{t_{n-1}}^{t_n}\sigma(x,t)P_k^n(\eta-\Pi_h^n\eta)\,d\tau(t)dx\bigg\}\Big|\nonumber
\\
&\leq  \sum_{n=1}^N \Big\{\sum_{K\in\mathcal{T}_h^n}\Big(\|\sigma-P_{k}^n\,\sigma\|_{L^{\infty}(I_n;L^2(K))}\|\eta-{P}_{k}^n\eta\|_{L^{\infty}(I_n;L^2(K))}\|\tau\|_{\mathfrak{B}(I_n)}\nonumber\\&\quad\quad+\|\sigma\|_{L^{\infty}(I_n;L^2(K))}\|P_k^n(\eta-\Pi_h^n\eta)\|_{L^{\infty}(I_n;L^2(K))}\|\tau\|_{\mathfrak{B}(I_n)}\Big)\Big\}\notag\\
&\leq   \max\{C_{I,2},C_{I,3}\} \sum_{n=1}^N\Big\{\sum_{K\in \mathcal{T}_{h}^n}\Big(k_n\|\sigma-P_k^n\sigma\|_{L^{\infty}(I_n;L^2(K))}^2\|\tau\|_{\mathfrak{B}(I_n)}^2\nonumber\\&\quad\quad+h_K^2\|\sigma\|_{L^{\infty}(I_n;L^2(K))}^2 \|\tau\|_{\mathfrak{B}(I_n)}^2 \Big)\Big\}^{\frac{1}{2}}
\|g\|_{L^2(0,T;L^2(\Omega))},\label{inter:fully:discrete:time:2}
\end{align}
where we used the Proposition \ref{psi:stability} and the following  properties
\begin{align*}
\|\eta-P_k^n\eta\|_{L^{\infty}(I_n;L^2(K))}&\leq C_{I,2}\,k^{\frac{1}{2}}_n\|\eta\|_{H^1(0,T;L^2(K))},\\\;\;\;\;\;\|\eta-\Pi_h^n\eta\|_{L^{\infty}(I_n;L^2(K))}&\leq C_{I,3}\,h_{K}\|\eta\|_{L^{\infty}(I_n;H^1(K))}.
\end{align*}
Combining (\ref{inter:lemma:1:eq:3}), (\ref{inter:lemma:1:eq:4}), (\ref{inter:lemma:1:eq:5}),  and (\ref{inter:fully:discrete:time:2}), we find that
\begin{align*}
&\|Y_h^n-y(U_h)\|_{L^2(0,T;L^2(\Omega))}\leq C_{3}\Bigg[\Big\{\sum_{n=1}^{N}k_n \Big(\sum_{K\in\mathcal{T}_h^n}h_{K}^{2(1+s)}\|k_n^{-1}(Y_h^n-Y_h^{n-1})-\Delta Y_h^n-U_h\|^2_{L^2(K)}
\nonumber\\&+\sum_{e\in\mathcal{E}_h^n} h_e^{1+2s}\Big\|\jump{\frac{\partial Y_h^n}{\partial n}}\Big\|_{L^2(e)}^2\Big)\Big\}^{\frac{1}{2}}+\|y_0-Y_h^0\|_{L^2(\Omega)}+\Big(\sum_{n=1}^{N}k_n\|Y^n_h-Y_h^{n-1}\|^2_{L^2(\Omega)}\Big)^{\frac{1}{2}}
\\&+\sum_{n=1}^N\bigg\{\sum_{K\in \mathcal{T}_{h}^n}\Big(k_n\|\sigma-P_k^n\,\sigma\|_{L^{\infty}(I_n;L^2(K))}^2\|\tau\|_{\mathfrak{B}(I_n)}^2+h_K^{2}\|\sigma\|_{L^{\infty}(I_n;L^2(K))}^2 \|\tau\|_{\mathfrak{B}(I_n)}^2 \Big)\bigg\}^{\frac{1}{2}}\Bigg].
\end{align*}
The proof is now complete. 
\end{proof}

\noindent
The intermediate error estimate  $\|\Phi_h-\phi(U_h)\|_{L^2(0,T;L^2(\Omega))}$ is obtained in the following lemma. 
\begin{lemma}\label{inter:lemma:phi:apos}
Let $\Phi_h$ and  $\phi(U_h)$ be the solutions of (\ref{ful:costate:2:1}) and (\ref{inter:adjoint:time}), respectively. Then, we have
\begin{align*}
\|\Phi_h-\phi(U_h)\|_{L^2(0,T;L^2(\Omega))}^2\leq C_{4}^2\sum_{i=6}^9\xi_i^n,
\end{align*}
where
\begin{align*}
\xi_6^n&:=\Big\{\int_0^T\sum_{K\in\mathcal{T}_h^n}h_{K}^{2(1+s)}\|-\Phi_{h,t}-\Delta \Phi_{h}^{n-1}-Y_h^n+P_k^ny_{d}\|^2_{L^2(K)}\,dt\notag\\&\quad\quad+
\int_0^T\sum_{e\in\mathcal{E}_h^n} h_e^{1+2s}\Big\|\jump{\frac{\partial \Phi_h^{n-1}}{\partial n}}\Big\|_{L^2(e)}dt\Big\},\\
\xi_7^n&:=\|Y_h^n-y(U_h)\|_{L^2(0,T;L^2(\Omega))}^2,\\
\xi_8^n&:=\|y_{d}-P_k^ny_{d}\|_{L^2(0,T;L^2(\Omega))}^2,\\
\xi_9^n&:=\|\Phi_h-\Phi_h^{n-1}\|_{L^2(0,T;H^1(\Omega))}^2,
\end{align*}
and $C_{4}=C_{R}\,\max\{1,\max\{C_{I,0},C_{I,0}C_{I,e}\}\}$.
\end{lemma}

\noindent
Using Lemmas \ref{apos:for:q:space}-\ref{inter:lemma:phi:apos}, we can obtain the required a posteriori error bounds.
\begin{theorem}\label{main:thm:space}
Let $(y,u,\phi)$ and $(Y_h,U_h,\Phi_h)$ be the solutions of (\ref{state:control:eq:1})-(\ref{optimality:condition:exp}) and (\ref{ful:state:2:1})-(\ref{ful:optimality:condition:2:1}), respectively. 
Assume that every requirement in Lemmas \ref{apos:for:q:space}-\ref{inter:lemma:phi:apos} is true. Then, we obtain
\begin{align*}
&\sum_{n=1}^N\int_{t_{n-1}}^{t_n}\|Y_h^n-y\|_{L^2(\Omega)}^2\,dt+\sum_{n=1}^N\int_{t_{n-1}}^{t_n}\|\Phi_h-\phi\|_{L^2(\Omega)}^2\,dt+\sum_{n=1}^{N}\int_{t_{n-1}}^{t_n}\|u-U_h\|_{L^2(\Omega)}^2\,dt\\&\leq C_{2}^2\,\xi_1^n+C_{3}^2\sum_{n=1}^N\{k_n(\xi_2^n+\xi_3^n+\xi_5^n)+\xi_4^n\}+C_{4}^2\sum_{n=1}^N\{\xi_6^n+\xi_8^n+\xi_9^n\},
\end{align*}
where $\xi_1^n$ is defined in Lemma \ref{apos:for:q:space}, $\xi_i^n,\;\;i=2,3,4,5$ are defined in Lemma \ref{inter:lemma:time:apos} and $\xi_i^n,\;\;i=6,\ldots,9$ are defined in Lemma \ref{inter:lemma:phi:apos}.
\end{theorem}
\begin{proof}
The proof begins with the use of triangle inequalities (\ref{Y:n}) and (\ref{phi:n}), as well as Theorem \ref{stability} and Proposition \ref{psi:stability}. The rest of the proof is completed by using Lemmas \ref{apos:for:q:space}-\ref{inter:lemma:phi:apos}.
\end{proof}
%%%%%%%%%%%%%%%%%%%%%%%%%%%%%%%%%%%%%%%%%%%%%%%%%%%%%%%%%%%%%
 \begin{remark}
 The estimators presented in Theorem \ref{main:thm:space} are contributed by the approximation errors of the control, state, and co-state variables.
 The estimators are generally influenced
by the approximation errors for the state and co-state, whereas $\xi_1^n$ is mostly determined by the approximation error for the control variable.
The co-state equation contributes the estimators $\xi_6^n$, $\xi_8^n$, $\xi_9^n$, while the state equation contributes the estimators $\xi_2^n, \ldots, \xi_5^n$.
These estimators are divided into three parts: the estimators $\xi_3^n$ and $\xi_9^n$ are generated by the approximation of time, the estimators $\xi_2^n$, $\xi_6^n$ are caused by the discretization of space and the estimators $\xi_4^n$ and $\xi_8^n$ are induced by the  data approximation.
For directing an adaptive algorithm, these estimators are extremely useful. 

\end{remark}

\begin{remark}
We choose $\tau=\delta_{t^*}$, where $\delta_{t^*}$ stands for the Dirac measure focused at time $t=t_{*}$.
Consider the set of indices $\mathcal{I}$ for the time partitions where the emphasis of the measure data $\delta_{t^{*}}$ is placed. Let $t_*\in (t_{n-1},t_{n}]$ for some $n\in\mathbb{N}$.
Then,  $\xi_5^n$ of Theorem \ref{main:thm:space} reduces to
\begin{align*}
\xi_5^n:=\sum_{K\in \mathcal{T}_{h,\mathcal{I}}}\Big(h_K^2\|\sigma(\cdot,t_*)\|^2_{L^2(K)}+k_{\mathcal{I}}\|\sigma(\cdot,t_*)\|_{L^2(K)}^2\Big).
\end{align*}

\end{remark}
\section{Numerical experiments}
The numerical results for a two-dimensional  problem are presented in this section to support our theoretical conclusions. The projection gradient method  is used to solve the optimization problem. The numerical tests are performed by  utilizing  the software
 Free Fem++ \cite{hetch} and all the constants are taken to be one. If $E$ is an error functional, then we define the order of convergence between two mesh of sizes $h_1>h_2$ as 
\begin{align*}
order = \frac{\log (E(h_1)/E(h_2))}{\log (h_1/ h_2)}.
\end{align*}

\textbf{ Data of the problem and true solution.} We use the example considered in \cite{pshakya19} on L-shape domain $\Omega=(0,1)^2\setminus [\frac{1}{2},1]^2$. We have used the point-wise control constraints  $u_a=-0.5$ and $u_b=0.1$. The final time $T=1$, and the remaining data of the problem are given as follows

\noindent
\begin{align*}
\sigma\tau&=\sin(\pi(x_1^2+x_2^2))\delta_{\frac{1}{2}}-u+\sin(\pi(x_1^2+x_2^2))\hat{\gamma}(t)+\{-4\pi
 \cos(\pi(x_1^2+x_2^2))\\&\quad\quad+4\pi^2(x_1^2+x_2^2) \sin(\pi(x_1^2+x_2^2))\}\tilde{\tilde{\gamma}}(t)
\end{align*}
with 
$
\hat{\gamma}(t)=\begin{cases} 2t\;\;\;& t<\frac{1}{2},\\
2t+2\;\;\;\; & t\geq \frac{1}{2},
\end{cases}
$
\hspace{0.9cm}
and
\hspace{0.9cm}
$\tilde{\tilde{\gamma}}(t)=\begin{cases}t^2\;\;\;\;\;\;\;\;\;\;\;\;t<\frac{1}{2},\\t^2+2t\;\;\;\;t\geq \frac{1}{2}.
\end{cases}$

\noindent
The desired state is
\begin{align*}
y_d=\sin(\pi(x_1^2+x_2^2))+4\pi t \cos(\pi(x_1^2+x_2^2))+(\tilde{\tilde{\gamma}}(t) -4\,\pi^2 t(x_1^2+x_2^2)) \sin(\pi(x_1^2+x_2^2)).
\end{align*}
 The exact  state and co-state  are given by
\begin{align*}
y&=\sin(\pi(x_1^2+x_2^2))\tilde{\tilde{\gamma}}(t),\;\;\;\;
\phi=\sin(\pi(x_1^2+x_2^2)) t,
\end{align*}
and the exact control is calculated by the formula (\ref{exp:for:qbar:time}) and (\ref{pointwise:projection}). 
 
\noindent
First, we validate the results obtained in Section $4$.  For the spatial discretization of the state and co-state variables the continuous piecewise linear polynomials   are utilized, whereas the piecewise constant functions are used for the control variable.
 With a uniform time step size of $k \approx h^{2s}$ with $s=0.6$, the backward Euler scheme is employed to approximate the time derivative. Table \ref{tab11} displays the order of convergence for various degrees of freedom (Dof) together with the  errors computed in the $L^2(0,T;L^2(\Omega))$-norm at final time $T=1$.  We observe that near the L-shape corner, the regularity of the state, co-state and control variables is not enough to get the linear rate of convergence.  
Table \ref{tab11} demonstrates that the error decreases as the Dof rises and we obtained the convergence rate matches with the results obtained in Section $4$.
The exact and discrete control profiles are depicted in Figure \ref{control}.

Next, we verify the findings from Section 5 of our studies. 
The error estimators derived in Section 5, are utilized as the error indicator in the adaptive loop.
 \begin{align*}
\text{SOLVE}\rightarrow \text{ESTIMATE}\rightarrow \text{MARK}\rightarrow \text{REFINE}
\end{align*}
The development of the space-time algorithm  is based on \cite{manohar21}. 
 To see the performance of  a posteriori error estimators, we set the time step size $k_n \approx h_K^{2s}$ with $s=0.6$. The space and time tolerances are taken to be $10^{-2}$.
 Table \ref{tab:1} shows the error and convergence rate for the state, co-state and control variables in the adaptive meshes. It has been noted that the local refinement of the meshes improves the convergence rate. In Figure \ref{adaptive:msh}, we present the adaptive meshes at different level of refinements.     Figure \ref{adaptive:msh} demonstrates how effectively the mesh adapts  in the vicinity of the L-shape corner and a large number of nodes is distributed  along this corner.

 \begin{figure}[h!]
\centering
\begin{subfigure}{.5\textwidth}
  \centering
  \includegraphics[width=8cm, height=5cm]{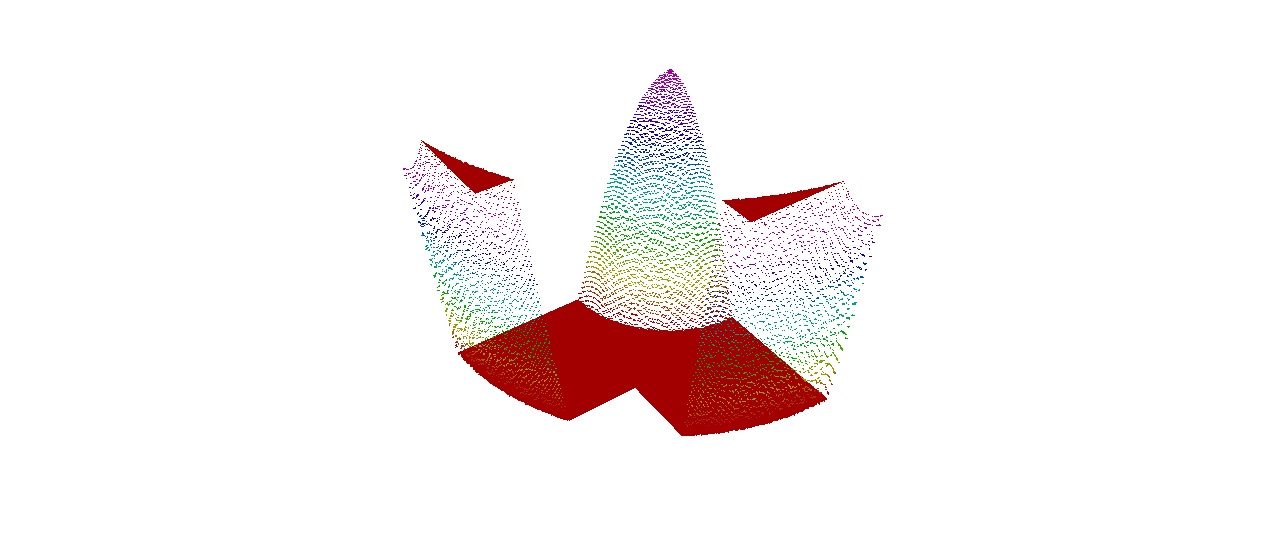}
  \caption{Approximate control}
\end{subfigure}%
\begin{subfigure}{.5\textwidth}
 \centering
 \includegraphics[width=8cm, height=5cm]{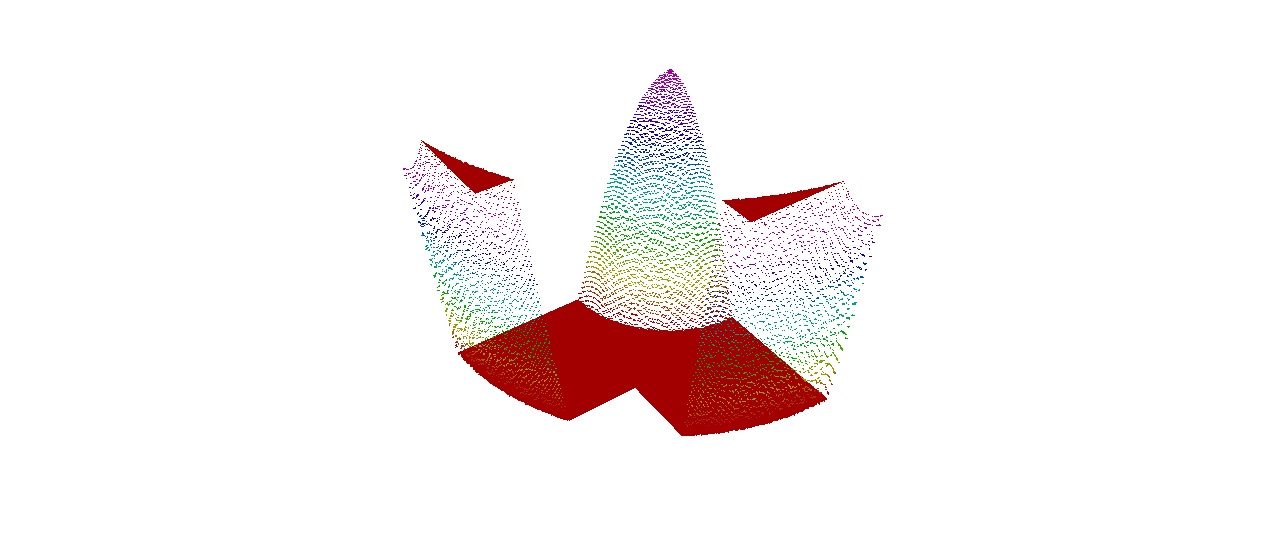}
   \caption{ Exact control}
\end{subfigure}
\caption{The profiles  of the control variable  at $T=1$ with $11785$ Dof.}\label{control}
\end{figure}

\begin{small}
\begin{table}[h!]
\begin{center}
\caption{Errors of  the state $y$, co-state $\phi$ and control $u$ variables on uniform meshes}\label{tab11}
\begin{tabular}{|c|c|c|c|c|c|c|c|}
\hline
Dof & N&$\|y-Y_h\|_{L^2(0,T;L^2(\Omega))}$ &order&$\|\phi-\Phi_h\|_{L^2(0,T;L^2(\Omega))}$ &order & $\|u-U_h\|_{L^2(0,T;L^2(\Omega))}$&order \\
\hline
 25 &  4&$9.43224\times 10^{-1}$   &    -  & $ 3.52156\times 10^{-1} $ &      -    &$1.96823\times 10^{-1} $ &  -\\
 81 & 8&  $6.58948\times 10^{-1}$ &0.6101& $2.37896\times 10^{-1}$ & 0.6672  &$1.32783\times 10^{-1}$  &  0.6695\\
289  &  16&$4.39783\times 10^{-1}$  &0.6359& $1.57783\times 10^{-1}$ & 0.6457   & $8.75573\times 10^{-2}$ &  0.6549\\
1089 &32 &$2.82896\times 10^{-1}$  &0.6652& $9.98987\times 10^{-2}$& 0.6891 & $5.64994\times 10^{-2}$ & 0.6604   \\
4225& 64& $1.79678\times 10^{-1}$&0.6696&$6.37896\times 10^{-2}$& 0.6617& $3.62287\times 10^{-2}$ & 0.6555\\
\hline
\end{tabular}
\end{center}
\end{table} 
\end{small}

%%%%%%%%%%%%%%%%%%%%%%%%%%%%%%%%%%%%%%%%%%%
\begin{small}
\begin{table}[h!]
\begin{center}
\caption{Errors of  the state $y$, co-state $\phi$ and control $u$ variables on  adaptive meshes}\label{tab:1}
\begin{tabular}{|c|c|c|c|c|c|c|c|}
\hline
Dof & N&$\|y-Y_h\|_{L^2(0,T;L^2(\Omega))}$ &order&$\|\phi-\Phi_h\|_{L^2(0,T;L^2(\Omega))}$ &order & $\|u-U_h\|_{L^2(0,T;L^2(\Omega))}$&order \\
\hline
 36 &  4&$8.26735\times 10^{-1}$   &    -  & $ 1.39846\times 10^{-1} $ &      -    &$9.35682\times 10^{-1} $ &  -\\
209 & 8&  $3.29768\times 10^{-1}$ &1.0451& $5.95639\times 10^{-2}$ & 0.9705 &$4.17895\times 10^{-1}$  &  0.9166\\
417  &  16&$2.25869\times 10^{-1}$  &1.0957& $4.19460\times 10^{-2}$ & 1.0153   & $2.96932\times 10^{-1}$ &  0.9894\\
615& 32 &$1.86984\times 10^{-1}$  &0.9725& $3.39783\times 10^{-2}$& 1.0844 & $2.43673\times 10^{-1}$ & 1.0175  \\
1445& 64& $1.19182\times 10^{-1}$&1.0570&$2.15249\times 10^{-2}$& 1.0714& $1.56695\times 10^{-1}$ & 1.0363\\
\hline
\end{tabular}
\end{center}
\end{table} 
\end{small}
%%%%%%%%%%%%%%%%%%%%%%%%%%%%%%%%%%%%%%%%%%%%
\begin{figure}[h!]
\centering
\begin{subfigure}{.4\textwidth}
 \centering
 \includegraphics[width=3.5cm, height=3.5cm]{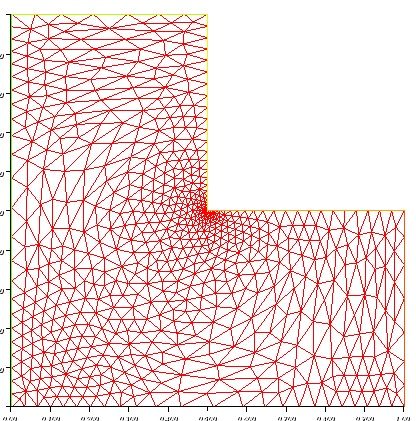}
   \caption{Step-1}
\end{subfigure}
\begin{subfigure}{.4\textwidth}
 \centering
 \includegraphics[width=3.5cm, height=3.5cm]{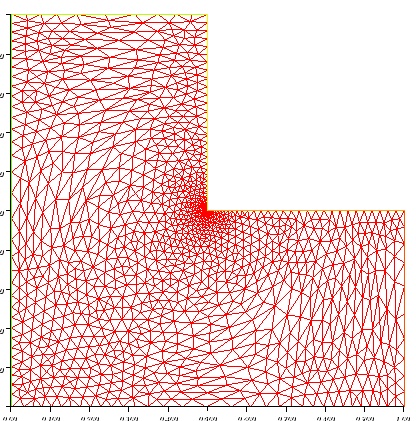}
   \caption{Step-2}
\end{subfigure}
\begin{subfigure}{.4\textwidth}
 \centering
 \includegraphics[width=3.5cm, height=3.5cm]{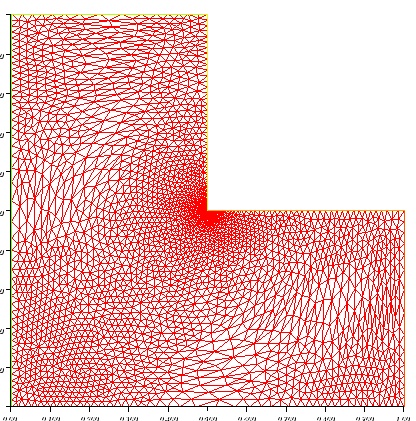}
   \caption{Step-3}
\end{subfigure}
\begin{subfigure}{.4\textwidth}
 \centering
 \includegraphics[width=3.5cm, height=3.5cm]{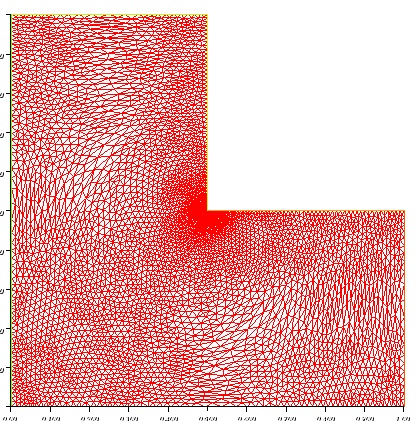}
   \caption{Step-4}
\end{subfigure}
\caption{Adaptive meshes for the state at different level of refinements  at time $T=1$.}\label{adaptive:msh}
\end{figure}

\noindent
\textbf{Acknowledgements.} 
The author wishes to thank the anonymous referees for their valuable comments and constructive suggestions that lead to the improvement of the content of the manuscript.

%\noindent
%\textbf{Conflict of interest.} The authors declare no competing interests.\

 \end{document}